\newtheorem{thm}{Theorem}[section]
\newtheorem{cor}[thm]{Corollary}
\newtheorem{lem}[thm]{Lemma}
\newtheorem{prop}[thm]{Proposition}
\theoremstyle{definition}
\newtheorem{defn}[thm]{Definition}
\newtheorem{ex}[thm]{Example}
\newtheorem{rem}[thm]{Remark}
\numberwithin{equation}{section}
\definecolor{grey}{gray}{.3}
\newcommand{\bdefn}{\begin{defn}}
\newcommand{\edefn}{\end{defn}}
\newcommand{\bdefnr}{\begin{defnr}}
\newcommand{\edefnr}{\end{defnr}}
\newcommand{\benum}{\begin{enumerate}}
\newcommand{\benuma}{\begin{enumerate}}
\newcommand{\eenum}{\end{enumerate}}
\newcommand{\eenuma}{\end{enumerate}}
\newcommand{\bthmn}{\begin{thmn}}
\newcommand{\ethmn}{\end{thmn}}
\newcommand{\bthm}{\begin{thm}}
\newcommand{\ethm}{\end{thm}}
\newcommand{\bnota}{\begin{nota}}
\newcommand{\enota}{\end{nota}}
\newcommand{\bproof}{\begin{proof}}
\newcommand{\eproof}{\end{proof}}
\newcommand{\bprop}{\begin{prop}}
\newcommand{\eprop}{\end{prop}}
\newcommand{\bcor}{\begin{cor}}
\newcommand{\ecor}{\end{cor}}
\newcommand{\blem}{\begin{lem}}
\newcommand{\elem}{\end{lem}}
\newcommand{\brem}{\begin{rem}}
\newcommand{\erem}{\end{rem}}
\newcommand{\bex}{\begin{ex}}
\newcommand{\eex}{\end{ex}}
\newcommand{\boq}{\begin{openq}}
\newcommand{\eoq}{\end{openq}}
\newcommand{\btab}{\begin{tabular}}
\newcommand{\etab}{\end{tabular}}
\newcommand{\bctab}{\begin{center}\begin{tabular}}
\newcommand{\ectab}{\end{tabular}\end{center}}
\newcommand{\ba}{\begin{array}}
\newcommand{\ea}{\end{array}}
\newcommand{\bea}{\begin{eqnarray}}
\newcommand{\eea}{\end{eqnarray}}
\newcommand{\bean}{\begin{eqnarray*}}
\newcommand{\eean}{\end{eqnarray*}}
\newcommand{\cbb}{\mathbb{C}}
\newcommand{\rbb}{\mathbb{R}}
\newcommand{\nbb}{\mathbb{N}}
\newcommand{\acal}{\mathcal{A}}
\newcommand{\ccal}{\mathcal{C}}
\newcommand{\bcal}{\mathcal{B}}
\newcommand{\ph}{pluri\-har\-monic}
\newcommand{\psh}{pluri\-sub\-har\-monic}
\newcommand{\subph}{sub\-pluri\-har\-monic}
\newcommand{\Subph}{Sub\-pluri\-har\-monic}
\newcommand{\qpsh}{$q$-pluri\-sub\-har\-monic}
\newcommand{\qpshy}{$q$-pluri\-sub\-har\-monicity}
\newcommand{\qpshies}{$q$-pluri\-sub\-har\-monicities}
\newcommand{\sqpsh}{strictly $q$-pluri\-sub\-har\-monic}
\newcommand{\PSH}{\mathit{PSH}}
\newcommand{\psc}{pseudo\-convex}
\newcommand{\qpsc}{$q$-pseudo\-convex}
\newcommand{\nbh}{neighborhood}
\newcommand{\usc}{upper semi-con\-ti\-nuous}
\newcommand{\fcts}{functions}
\renewcommand{\and}{\mathrm{and}}
\renewcommand{\Re}{\mathrm{Re}}
\renewcommand{\Im}{\mathrm{Im}}
\title[On convex hulls and pseudoconvex domains]
{On convex hulls and pseudoconvex domains generated 
by $q$-plurisubharmonic functions,\\ part III}
\author{T. Pawlaschyk}
\address{University of Wuppertal, School of Mathematics and Natural Sciences, Gaussstrasse 20, 42119 Wuppertal, Germany}
\email{pawlaschyk@math.uni-wuppertal.de}
\author{E. S. Zeron}
\address{Departamento de Matem\'aticas, CINVESTAV del IPN, 
Apartado Postal 14-740, Ciudad de M\'exico, 07000, Mexico}
\email{eszeron@math.cinvestav.edu.mx}
\thanks{Research supported by the Deutscher Akademischer Austauschdienst 
(DAAD) and Conacyt M\'exico under the PPP Proalmex Project No. 51240052. 
The first author was supported by the Deutsche Forschungsgemeinschaft 
(DFG) under the grant SH 456/1-1, {\it Pluripotential Theory, Hulls and 
Foliations}; and the second author was supported by Cinvestav del IPN in 
M\'exico while he was in a sabbatical leave in CRM de l'Universit\'e de 
Montr\'eal in Canada.\\
The original manuscript was submited for its publication to 
the {\bf Journal of Mathematical Analysis and Applications}}
\subjclass[2010]{Primary 32U05, Secondary 35D40, 31C10}
\keywords{(strictly) \qpsh\ functions, viscosity subsolutions, Moreau sup-convolution}
\date{\today}
\begin{document}
\begin{abstract} 
We characterise in this work the \qpsh\ functions in terms of the 
theory of viscosity solutions. We show that an \usc\ function is 
\qpsh\ if and only if its complex Hessian has at most $q$ strictly 
negative eigenvalues in the viscosity sense. This characterisation is 
then used to prove that the sup-convolution of a (strictly) \qpsh\ 
function is again (strictly) \qpsh\ on a maybe different set of 
definition. Finally, we use the supremum convolution to deduce 
a new characterisation for the \qpsc\ subsets in $\cbb^n$. 
\end{abstract}

\maketitle

\section{Introduction}

This article is a sequel and conclusion to our research previously 
developed in the works \cite{TPESZ} and \cite{TPESZ2}. Our main 
objective now is to characterise the \qpsh\ functions in terms of the 
theory of viscosity solutions; and this result is posed and proven in 
Definition~\ref{def-visc-qpsh}, Proposition~\ref{prop-usc-visc-qpsh}, 
and Theorem~\ref{thm-usc-visc-qpsh} in the following chapter. There 
we show that an \usc\ function $u$ is \qpsh\ if and only if either 
$u\equiv-\infty$ or its complex Hessian $H^{\cbb}u$ has at most $q$ 
strictly negative eigenvalues in the viscosity sense; i.e., if for 
every $\ccal^2$-smooth test function $\varphi$, the complex Hessian 
$H^{\cbb}\varphi$ has at most $q$ strictly negative eigenvalues at those 
points where $u-\varphi$ attains its maximum. This result was inspired 
by Alvarez, Lasry, and Lions paper \cite{ALL-convex}, where the authors 
showed that an \usc\ function $u$ is convex if and only if its real 
Hessian $H^{\rbb}u$ has no strictly negative eigenvalues in the 
viscosity sense. We strongly recommend the manuscripts of Crandall 
\cite{Crandall}, Crandall et al. \cite{CIL-viscosity}, Koike 
\cite{Koike}, or Katzourakis \cite{Katzourakis} for a general 
presentation on the theory of viscosity solutions.

On the other hand, in 1984 S{\l}odkowski introduced a product 
based supremum convolution for \qpsh\ functions $u$ in order to 
build non-increasing sequences of continuous \qpsh\ functions 
$\{u_k\}_{k=1}^\infty$ that converge pointwise to the original $u$, 
when $k\to\infty$, and such that each $u_k$ has almost everywhere 
second-order derivatives in the Peano sense; see \cite{Sl2}.

In the same form, one can apply the sup-convolution introduced by 
Moreau in 1963 \cite{Moreau1963,Moreau1966} in order to produce 
continuous approximations to \qpsh\ functions, because the \qpshy\ 
can be characterised in terms of viscosity subsolutions of the elliptic 
degenerate operator $\ominus$ given in Lemma~\ref{elliptic-degenerate} 
and one has the so called \textit{magic property}: the sup-convolution 
of a viscosity subsolution is again a viscosity subsolution. Besides, it 
is well known that the sup-convolution takes \usc\ functions $u$ (which 
may not be integrable) and in general produces continuous ones $u^\Phi$. 
For example, if $\Phi$ is of the form $-k\|{\cdot}\|^2$, the 
sup-convolution then satisfies $u^\Phi\geq{u}$, the sum 
$u^\Phi{+k}\|{\cdot}\|^2$ is convex, and $\{u^\Phi\}_{k=1}^\infty$ 
is a non-increasing sequence which converges pointwise to $u$ as 
$k\to\infty$; see Chapter~4 of \cite{Katzourakis} or Section~11 of 
\cite{Crandall}. We include a fast introduction to the sup-convolution 
and its properties in the third section of this work for the sake of 
completeness, but the reader who already knows the results can skip 
this section.

We conclude this work by showing in Proposition~\ref{prop-sup-conv} and 
the subsequent corollaries a variation of the \textit{magic property}: 
given any \qpsh\ function $u$ and a quadratic real-valued polynomial 
$g(y)=y^t\acal\overline{y}$ for a fixed Hermitian matrix $\acal$, the 
sup-convolution $[u{+g}]^\Phi{-g}$ is again \qpsh\ on a maybe different 
set of definition. As an application we deduce that the sup-convolution 
of a \sqpsh\ function is again \sqpsh; moreover, we also deduce a new 
characterisation for the \qpsc\ subsets in $\cbb^n$. This latter result 
extends a list of characterisations and properties of \qpsc\ sets we 
collected in \cite{TPESZ} and \cite{TPESZ2}.

\section{\qpsh\ functions as viscosity subsolutions}

We characterise in this section the \qpsh\ functions $u$ 
as viscosity subsolutions to the elliptic degenerate 
operator $u\mapsto\ominus(H^{\cbb}u)$ introduced in the 
Lemma~\ref{elliptic-degenerate} below; i.e., $u$ is \qpsh\ if and only 
if either $u\equiv-\infty$ or its complex Hessian $H^{\cbb}u$ has at 
most $q$ strictly negative eigenvalues in the viscosity sense. The main 
objective is to apply the results of the viscosity solutions theory 
originally developed by Crandall, Lions, and collaborators. We recommend 
in particular the manuscripts of Crandall \cite{Crandall}, Crandall et 
al. \cite{CIL-viscosity}, Koike \cite{Koike}, and Katzourakis 
\cite{Katzourakis}.

From now on $\langle{x,y}\rangle=\sum_{k=1}^n\Re(x_k\overline{y_k})$ 
denotes the standard real inner product between vectors $x$ and $y$ in 
$\cbb^n$, so that $\sqrt{\langle{x,x}\rangle}$ is the Euclidean norm 
$\|x\|$. In the same way, the notation $u\equiv-\infty$ means that $u$ 
is the function identically equal to $-\infty$ on its set of definition; 
the set $B_\rho(x)\subset\cbb^n$ is the standard Euclidean open ball of 
radius $\rho>0$ and with centre on $x$; while $U\subset\cbb^n$ always 
stands for a non-empty open set.

We need to recall the definition of a \qpsh\ function introduced by 
Hunt and Murray; see for example \cite{HM}. In particular, inspired 
by S{\l}odkowski's Lemma~\ref{sub-ph=(N-1)-psh} below, we prefer to 
introduce first the notion of \subph\ functions instead of the 
$(n{-1})$-\psh\ ones.

\begin{defn}\label{defqpsh} 
Let $u:U\to[-\infty,\infty)$ be an upper semi-continuous 
function defined on some open set $U\subset\cbb^n$.

\begin{enumerate}
\item\label{defqpsh-1} The function $u$ is \textit{\subph} on $U$ if and 
only if either $u\equiv-\infty$ on $U$ or $u+h$ has the maximum property 
for every continuous \psh\ function $h$ defined in a \nbh\ of any 
compact ball $\overline{B}\subset{U}$; i.e., the sum $u+h$ attains 
its maximum at the boundary~$bB$.

\item\label{defqpsh-2} For any integer $q$ with $0\leq{q}<n$, the 
function $u$ is \textit{\qpsh} on $U$ if and only if it is \subph\ on 
the intersection $U\cap\pi$ with every possible $(q{+}1)$-dimensional 
complex affine space $\pi$ in~$\cbb^n$. \Subph\ functions coincide with 
the $(n{-1})$-\psh\ ones.

\item\label{defqpsh-3} Finally, we say by convention that any \usc\ 
function is $N$-\psh\ whenever $N\geq{n}$.

\item The set of all \qpsh\ functions on $U$ is denoted by $\PSH_q(U)$. 

\end{enumerate}
\end{defn}

S{\l}odkowski showed in Lemma~4.4 of \cite{Sl} that conditions in the 
point~(\ref{defqpsh-1}) above can be relaxed; namely, it is sufficient 
to consider \ph\ polynomials (globally defined on $\cbb^n$) instead of 
\psh\ functions. Recall that a \ph\ polynomial (or function) is the real 
part $\pm\Re(h)$ of a holomorphic one $h$. We include S{\l}odkowski's 
lemma for the sake of completeness.

\begin{lem}\label{sub-ph=(N-1)-psh}{\rm\bf [S{\l}odkowski]}
Let $u:U\to[-\infty,\infty)$ be an \usc\ function well defined on 
a non-empty open set $U\subset\cbb^n$. We have that $u$ lies in 
$\PSH_{n-1}(U)$ if and only if the sum $u-\Re[\wp]$ has the local 
maximum property for every holomorphic polynomial $\wp:\cbb^n\to\cbb$; 
i.e., given any compact ball $\overline{B}$ in $U$, the new function 
$u-\Re[\wp]$ attains its maximum in the boundary $bB$. 
\end{lem}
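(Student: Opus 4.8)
The statement is an equivalence, and I would handle the two implications separately; the forward implication is essentially formal, while the converse carries all the content. Recall from Definition~\ref{defqpsh} that $\PSH_{n-1}(U)$ is precisely the class of \subph\ functions, so membership means that $u+h$ attains its maximum on $bB$ for every \cont\ \ph\ function $h$ defined near a compact ball $\overline B\subset U$. For the direction $u\in\PSH_{n-1}(U)\Rightarrow$ the polynomial maximum property I would simply observe that, given a holomorphic polynomial $\wp$, the function $-\Re[\wp]=\Re[-\wp]$ is a globally defined, hence \cont, \ph\ function; applying the defining property with $h=-\Re[\wp]$ gives at once that $u-\Re[\wp]$ attains its maximum on $bB$. (The case $u\equiv-\infty$ is trivial.)

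For the converse the plan is to pass from polynomials to arbitrary \cont\ \ph\ functions by uniform approximation. I would fix a compact ball $\overline B=\overline{B_\rho(x)}\subset U$ and a \cont\ \ph\ function $h$ on a \nbh\ of $\overline B$; by compactness this \nbh\ contains a concentric ball $B_{\rho'}(x)$ with $\rho'>\rho$. Since $B_{\rho'}(x)$ is simply connected, one can write $h=\Re[g]$ for some $g$ \hol\ on $B_{\rho'}(x)$, and the homogeneous Taylor expansion of $g$ about $x$ produces holomorphic polynomials $q_k$ with $q_k\to g$, hence $\Re[q_k]\to h$, uniformly on $\overline B$ (equivalently, one may invoke the Oka--Weil theorem, using that the convex compact set $\overline B$ is polynomially convex).

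The last step is to show that the boundary maximum property survives this uniform limit, which I expect to be the only point requiring care. Applying the hypothesis to the holomorphic polynomial $-q_k$ shows that $u-\Re[-q_k]=u+\Re[q_k]$ attains its maximum on $bB$ for each $k$, that is,
\[
\sup_{\overline B}\bigl(u+\Re[q_k]\bigr)=\max_{bB}\bigl(u+\Re[q_k]\bigr).
\]
Setting $\eps_k=\sup_{\overline B}|\Re[q_k]-h|\to 0$ and using that adding a term bounded by $\eps_k$ shifts every supremum by at most $\eps_k$, I would deduce $\sup_{\overline B}(u+h)\le\max_{bB}(u+h)+2\eps_k$; letting $k\to\infty$ gives $\sup_{\overline B}(u+h)\le\max_{bB}(u+h)$, and the reverse inequality is automatic, so $u+h$ attains its maximum on $bB$. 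Since $\overline B$ and $h$ were arbitrary, $u$ is \subph, i.e.\ $u\in\PSH_{n-1}(U)$. The genuine obstacle is thus the bookkeeping: guaranteeing that $h$ is a real part of a \hol\ function on an honest ball \nbh\ (so that polynomial approximation applies), and verifying that the limiting argument is unaffected by the possible $-\infty$ values of $u$ — which it is, because $u$ is only shifted by uniformly small \cont\ terms and its \usc\ ensures the suprema over the compact sets $\overline B$ and $bB$ are attained.
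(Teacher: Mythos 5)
There is a genuine gap, and it sits exactly where you yourself said the content was. You restate Definition~\ref{defqpsh}(\ref{defqpsh-1}) as requiring the boundary maximum property of $u+h$ for every continuous \emph{\ph} function $h$, but the definition actually quantifies over every continuous \emph{\psh} function $h$ defined near $\overline{B}$. With that misreading, your converse only has to pass from \ph\ polynomials $\Re[\wp]$ to general continuous \ph\ functions, which your argument does correctly (writing $h=\Re[g]$ on a slightly larger concentric ball, Taylor-approximating $g$, and pushing the maximum property through the uniform limit; the $\eps_k$ bookkeeping, the attainment of suprema by \usc\ functions on compacts, the handling of $-\infty$, and the easy forward implication are all fine). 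But that is the trivial half of the reduction. The substance of S{\l}odkowski's lemma is precisely the passage from \psh\ test functions down to \ph\ polynomial test functions, i.e., the hard direction ``polynomial maximum property $\Rightarrow u\in\PSH_{n-1}(U)$'' that the paper invokes in the proof of Theorem~\ref{thm-usc-visc-qpsh}; after your argument you know the maximum property of $u+h$ only for continuous \ph\ $h$, and you still owe it for, say, $h(z)=\beta\|z\|^2$.

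The gap cannot be closed by the same device. A uniform limit of \ph\ functions is again \ph\ (harmonicity along complex lines survives uniform limits), so a genuinely \psh\ test function such as $\|z\|^2$ is \emph{not} a uniform limit on $\overline{B}$ of real parts of holomorphic functions. One could try finite maxima $\max_j\Re[q_j]$ --- and it is a pleasant exercise that the boundary maximum property is stable under finite maxima in the test summand --- but then one would need every continuous \psh\ function near $\overline{B}$ to be uniformly approximable by such maxima, a Bremermann/Hartogs-type approximation statement that is nontrivial and nowhere addressed in your proposal. This is also why there is no in-paper proof to compare against: the paper's ``proof'' of this lemma is the citation to Lemma~4.4, pages 122--123, of \cite{Sl}, where this genuine reduction is carried out. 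As written, your argument establishes the weaker equivalence between the polynomial maximum property and the maximum property against all continuous \ph\ test functions, not membership in $\PSH_{n-1}(U)$ in the sense of Definition~\ref{defqpsh}.
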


\begin{proof}
See Lemma~4.4 and its proof in pages 122 and 123 of \cite{Sl}.
\end{proof}

The characterisation of \qpsh\ functions is simple when they are 
smooth enough, as it was indicated by Hunt, Murray and S{\l}odkowski. 
We present below a collection of properties of \qpsh\ functions, 
which can be found in various papers and are later used; see for 
example \cite{HM,Sl2,Fu2,Dieu,TPESZ,TPESZ2}.

\begin{prop}\label{propqpsh} 
Let $q$ and $r$ be a pair of non-negative integers, and $U$ be a 
non-empty open set in $\cbb^n$.

\begin{enumerate}
\item\label{qpsh-psh} 
The 0-\psh\ \fcts\ are the classical \psh\ ones. Moreover, 
$\PSH_q(U)$ is contained in $\PSH_r(U)$ whenever $q\leq{r}$.

\item\label{qpsh-local} 
The \qpshy\ is a local property; i.e. $u\in\PSH_q(U)$ if and only if 
every point $x\in{U}$ has a neighbourhood $\Omega\subset{U}$ such that 
the restriction $u|_\Omega$ lies in $\PSH_q(\Omega)$.

\item\label{qpsh-fcts} Given $c\geq0$ in $\mathbb{R}$ and two functions 
$u\in\PSH_q(U)$ and $v\in\PSH_r(U)$,
$$\begin{array}{ll}
cu\in\PSH_q(U),&\max\{u,v\}\in\PSH_{\max\{q,r\}}(U),\\[1pt]
u+v\in\PSH_{q+r}(U),&\min\{u,v\}\in\PSH_{q+r+1}(U).
\end{array}$$

\item\label{qpsh-decreasing} If $u_1\geq{u_2}\geq...$ is a 
non-increasing sequence of functions in $\PSH_q(U)$, then the point-wise 
limit $\displaystyle\lim_{k\to\infty}u_k$ lies in $\PSH_q(U)$.

\item\label{qpsh-supremum} Given a collection $\{u_j\}_{j\in{J}}$ in 
$\PSH_q(U)$ which is locally bounded from above at each point in~$U$, 
the upper semi-continuous regularisation 
$$z\mapsto\Big[\sup_{j\in{J}}u_j\Big]^\star\!(z)=
\limsup_{\zeta\to{z}}\Big[\sup_{j\in{J}}u_j(\zeta)\Big]
\quad\hbox{lies in}\quad\PSH_q(U).$$

\item\label{qpsh-loc-max} 
Assume that both $0\leq{q}<n$ and the open set $U\subset\cbb^n$ is 
bounded. Then, every function $u\in\PSH_q(U)$ that is \usc\ up to the 
boundary $bU$ also satisfies the maximum principle; i.e.,
$$\max_{\overline{U}}u\,=\,\max_{bU}u.$$

\item\label{qpsh-smooth} 
A $\mathcal{C}^2(U)$-smooth function $u$ is \qpsh\ on 
$U$ if and only if its complex Hessian below has at most $q$ strictly 
negative eigenvalues at each point $z\in{U}$,
\begin{equation}\label{complex-hessian}
H^{\cbb}u:=\left[\frac{\partial^2u}{\partial{z_k}
\partial\overline{z_\ell}}\right]_{k,\ell=1,\ldots,n}.
\end{equation}

\end{enumerate}
\end{prop}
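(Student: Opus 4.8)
The plan is to organise the seven assertions around two pillars: the slicing definition of \qpshy\ and the smooth characterisation (7), reducing every remaining part either to the classical plurisubharmonic theory applied slice by slice or to a single linear-algebra lemma on Hermitian forms. I regard (7) as the engine and would prove it first.

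For (7) I would isolate the purely algebraic fact that a Hermitian $n\times n$ matrix $H$ has at most $q$ strictly negative eigenvalues if and only if its restriction to every $(q+1)$-dimensional complex subspace $V$ fails to be negative definite, i.e.\ carries a nonzero $v$ with $\langle Hv,v\rangle\geq0$. One direction follows by spanning $V$ from eigenvectors attached to negative eigenvalues; the other from the dimension count $\dim V+\dim W_{\geq0}\geq(q+1)+(n-q)>n$, which forces a nontrivial intersection with the nonnegative eigenspace $W_{\geq0}$. Granting this, (7) reduces to the top-degree case on each $(q+1)$-dimensional affine slice $\pi\cong\cbb^{q+1}$: a $\ccal^2$ function is subpluriharmonic on $\pi$ exactly when $H^{\cbb}u$ is nowhere negative definite on $\pi$. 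That base case is the genuine analytic input. I would show that negative definiteness of the Hessian on an open set makes $-u$ strictly plurisubharmonic there and so destroys the maximum property, whereas a nonnegative eigenvalue at every point lets one compare $u$ with pluriharmonic test polynomials through Lemma~\ref{sub-ph=(N-1)-psh}.

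With (7) available the algebraic items follow. In (1), $0$-\pshy\ is subharmonicity along complex lines, which is classical \pshy; the inclusion $\PSH_q\subseteq\PSH_r$ for $q\leq r$ follows by a slicing argument, since if $u+h$ had an interior maximum on a ball inside an $(r+1)$-dimensional slice, the $(q+1)$-dimensional subslice through that point would already violate the subpluriharmonic maximum property, forcing the maximum to the boundary. In (3) the cases $cu$ and $\max\{u,v\}$ are immediate: $h/c$ is again plurisubharmonic, and the maximum property is stable under pointwise maxima. The statements $u+v\in\PSH_{q+r}$ and $\min\{u,v\}\in\PSH_{q+r+1}$ are the delicate ones; I would prove them by smoothing to the $\ccal^2$ case and invoking the subadditivity of the negative-eigenvalue count, namely that on $W_u\cap W_v$ the sum $H^{\cbb}(u+v)=H^{\cbb}u+H^{\cbb}v$ stays nonnegative on a subspace of codimension at most $q+r$, the extra unit in the minimum case accounting for the eigenvalue crossing along the coincidence set $\{u=v\}$. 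The smoothing is then removed using (4) and (5), which are established independently below, so no circularity arises.

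The remaining parts (2), (4), (5), (6) are classical potential-theoretic facts transferred through slicing. Locality (2) is built into the maximum property, which is tested only on compact balls, so a neighbourhood-wise \qpsh\ function satisfies it on every ball by a covering argument. Decreasing limits (4) preserve both upper semicontinuity and the maximum property, while the upper-envelope statement (5) is the Brelot--Cartan lemma applied on each slice, the upper semicontinuous regularisation commuting with restriction to $\pi$. The maximum principle (6) comes from taking $h\equiv0$ in the definition on balls together with an exhaustion of the bounded open set $U$, the hypothesis $q<n$ guaranteeing that nontrivial slices exist. The principal obstacle throughout is the base case of (7): pinning down the exact equivalence between the maximum property against plurisubharmonic (equivalently pluriharmonic) test functions and the sign of the top eigenvalue of the complex Hessian, since this is precisely what converts the geometric slicing definition into the eigenvalue criterion on which every other assertion rests.
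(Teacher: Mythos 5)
You should first be aware of what the paper itself does here: it gives \emph{no} proof of Proposition~\ref{propqpsh} at all, presenting it as a collection of known facts with citations to \cite{HM,Sl2,Fu2,Dieu,TPESZ,TPESZ2}. Measured against the standard proofs in those sources, much of your sketch is a correct reconstruction: the algebraic lemma behind~(\ref{qpsh-smooth}) (at most $q$ strictly negative eigenvalues iff the form is negative definite on no $(q{+}1)$-dimensional subspace, via the two dimension counts you give), its reduction of~(\ref{qpsh-smooth}) to the subpluriharmonic base case on a slice, and your handling of~(\ref{qpsh-psh}), of $cu$ and $\max\{u,v\}$ in~(\ref{qpsh-fcts}), and of~(\ref{qpsh-decreasing}) are all sound and essentially the classical Hunt--Murray/S{\l}odkowski route.

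The genuine gap is your argument for $u+v\in\PSH_{q+r}$ and $\min\{u,v\}\in\PSH_{q+r+1}$. You propose to ``smooth to the $\ccal^2$ case'' and then remove the smoothing ``using (4) and (5)'', but no such smoothing exists at this stage: integral convolution with a mollifier does \emph{not} preserve \qpshy\ for $q\geq1$, essentially because $\PSH_q$ is not a convex cone (e.g.\ $-|z_1|^2$ and $-|z_2|^2$ are $1$-\psh\ in $\cbb^2$ but their average is not), so justifying that averages of translates remain \qpsh\ is circular with the very sum theorem you want. Points~(\ref{qpsh-decreasing}) and~(\ref{qpsh-supremum}) run the wrong way: they pass from approximants to limits but manufacture no smooth \qpsh\ approximants of a given \usc\ one. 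This is precisely why S{\l}odkowski built his special sup-convolution producing approximants that are only twice differentiable almost everywhere in the Peano sense \cite{Sl2}; the sum and minimum statements are deep theorems of his, not corollaries of the smooth eigenvalue count. Within this paper the circularity is concrete: the proof of Proposition~\ref{prop-usc-visc-qpsh} invokes point~(\ref{qpsh-fcts}), so~(\ref{qpsh-fcts}) cannot be recovered from the paper's later sup-convolution results either. Note also that a minimum of two smooth functions is not smooth, so your ``smooth case'' does not even apply to $\min\{u,v\}$, and the ``extra unit from eigenvalue crossing'' is a heuristic, not an argument. A smaller but real error: in~(\ref{qpsh-supremum}) the \usc\ regularisation does \emph{not} commute with restriction to a slice $\pi$ --- for $u_k=\max\{-1,\ln|z_2|/k\}$ in $\cbb^2$ and $\pi=\{z_2=0\}$ the ambient regularisation of the supremum equals $0$ on $\pi$ while the slice regularisation equals $-1$ --- so ``Brelot--Cartan on each slice'' as stated fails; the correct proof checks the maximum property for $[\sup_ju_j]^\star$ by running the boundary estimate over parallel translates of the slice ball and passing to the limsup.
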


We can now proceed with the main objective of this section; i.e, we 
characterise the \qpsh\ functions as those \usc\ functions whose complex 
Hessian has at most $q$ strictly negative eigenvalues in the viscosity 
sense. To do so, we need to show first that the number of strictly 
negative eigenvalues is an elliptic degenerate operator acting on the 
set of Hermitian matrices.

\begin{lem}\label{elliptic-degenerate}
Let $\ominus(\acal)$ be the number of strictly negative eigenvalues of 
the Hermitian matrix $\acal\in\cbb^{n\times{n}}$, and $\oplus(\acal)$ 
be its number of strictly positive eigenvalues. The operators $\ominus$ 
and $-\oplus$ are both elliptic degenerate; i.e., the inequalities 
below hold for all Hermitian matrices $\acal\geq\bcal$,
\begin{equation}\label{eqn-ellipt-degenerate}
\ominus(\acal)\leq\ominus(\bcal)\quad\hbox
{and}\quad\oplus(\acal)\geq\oplus(\bcal). 
\end{equation}
\end{lem}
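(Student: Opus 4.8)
The plan is to reduce the whole statement to the standard monotonicity of eigenvalues under the Loewner order, phrased through a subspace-dimension (inertia) characterisation of the counting functions $\oplus$ and $\ominus$. The relation $\acal\geq\bcal$ is read as $\acal-\bcal$ being positive semidefinite, that is, $\langle\acal v,v\rangle\geq\langle\bcal v,v\rangle$ for every $v\in\cbb^n$, and this pointwise inequality between the associated Hermitian forms is the only hypothesis I will exploit.

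First I would record the key interpretation, which is the one piece of genuine content: for any Hermitian $\acal$, the number $\oplus(\acal)$ equals the maximal dimension of a complex linear subspace $V\subset\cbb^n$ on which the form $v\mapsto\langle\acal v,v\rangle$ is strictly positive, i.e.\ $\langle\acal v,v\rangle>0$ for every nonzero $v\in V$. This follows from the spectral theorem. On the one hand, taking $V$ to be the span of the eigenvectors attached to the strictly positive eigenvalues realises the dimension $\oplus(\acal)$ and the form is positive definite there. On the other hand, letting $W$ be the span of the eigenvectors attached to the non-positive eigenvalues, one has $\dim W=n-\oplus(\acal)$ and $\langle\acal w,w\rangle\leq0$ on $W$; hence any subspace $V$ with $\dim V>\oplus(\acal)$ satisfies $\dim V+\dim W>n$, so $V\cap W\neq\{0\}$ produces a nonzero vector on which the form is $\leq0$, ruling out strict positivity.

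Next, assuming $\acal\geq\bcal$, I would choose a maximal subspace $V$ witnessing $\dim V=\oplus(\bcal)$ for $\bcal$. For every nonzero $v\in V$ we get $\langle\acal v,v\rangle\geq\langle\bcal v,v\rangle>0$, so $V$ is also a subspace of strict positivity for $\acal$; the characterisation then forces $\oplus(\acal)\geq\dim V=\oplus(\bcal)$, which is the second inequality in \eqnref{eqn-ellipt-degenerate}. Finally, for the first inequality I would apply what was just proven to the reversed pair $-\bcal\geq-\acal$. Since the strictly negative eigenvalues of a Hermitian matrix are exactly the strictly positive eigenvalues of its negative, we have $\ominus(\acal)=\oplus(-\acal)$ and $\ominus(\bcal)=\oplus(-\bcal)$, so $\oplus(-\acal)\leq\oplus(-\bcal)$ immediately gives $\ominus(\acal)\leq\ominus(\bcal)$.

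I do not expect a serious obstacle here, since the result is a soft consequence of the spectral theorem; the only point demanding care is the dimension-counting argument in the first step, namely verifying that a subspace of dimension exceeding $\oplus(\acal)$ must meet the non-positive spectral subspace nontrivially. One could equally run the argument through the Courant--Fischer min-max formula for the $k$-th eigenvalue, deducing $\lambda_k(\acal)\geq\lambda_k(\bcal)$ from $\acal\geq\bcal$ and then comparing the signs entry by entry, but the subspace formulation has the advantage of making the sign-flip symmetry between $\oplus$ and $\ominus$ completely transparent.
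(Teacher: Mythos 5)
Your proof is correct, but it takes a genuinely different route from the paper's. The paper disposes of the lemma in three lines by citing Weyl's monotonicity theorem (Theorem~7.9 in Zhang's book): if $\acal\geq\bcal$, the decreasingly ordered eigenvalues satisfy $\lambda_k(\acal)\geq\lambda_k(\bcal)$ for every $k=1,\ldots,n$, and both inequalities in \eqnref{eqn-ellipt-degenerate} follow by comparing signs index by index --- precisely the Courant--Fischer alternative you sketch in your closing remark. You instead prove the counting inequalities directly from the variational characterisation of inertia: $\oplus(\acal)$ is the maximal dimension of a subspace on which the Hermitian form $v\mapsto\langle\acal v,v\rangle$ is positive definite, which you establish from the spectral theorem together with the dimension count $\dim V+\dim W>n\Rightarrow V\cap W\neq\{0\}$ against the nonpositive spectral subspace $W$; monotonicity of $\oplus$ is then immediate from $\langle\acal v,v\rangle\geq\langle\bcal v,v\rangle$, and the $\ominus$ inequality follows by the sign flip $\ominus(\acal)=\oplus(-\acal)$ applied to the reversed pair $-\bcal\geq-\acal$. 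Each approach buys something: the paper's citation is shorter and records the stronger eigenvalue-by-eigenvalue comparison $\lambda_k(\acal)\geq\lambda_k(\bcal)$, which is more information than the lemma needs; your argument is self-contained (no external reference and no min-max machinery), isolates exactly the monotone quantity the lemma concerns, and makes the symmetry between $\oplus$ and $\ominus$ completely transparent. All your steps check out, including the two-sided verification of the subspace characterisation, which is the only place where a gap could have hidden.
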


Notice that neither the operator $\ominus$ nor $\oplus$ is continuous 
and that the latter sign ``$\geq$" designates the L\"owner partial 
order in the set of Hermitian matrices; i.e., $\acal\geq\bcal$ whenever 
$\acal{-\bcal}$ is positive semidefinite.

\begin{proof} This is a simple consequence of Theorem~7.9 of 
\cite{Zhang}, because for any two Hermitian matrices $\acal\geq\bcal$ 
with ordered eigenvalues: 
$$\lambda_1(\acal)\geq\ldots\geq\lambda_n(\acal)\quad\hbox
{and}\quad\lambda_1(\bcal)\geq\ldots\geq\lambda_n(\bcal),$$ 
the inequalities $\lambda_k(\acal)\geq\lambda_k(\bcal)$ 
hold for all indices $k=1,\ldots,n$. 
\end{proof}

As in the classical theory of viscosity solutions, the definition for 
\qpsh\ functions in the viscosity sense is naturally motivated by the 
following result for $\ccal^2$-differentiable functions.

\begin{lem}\label{lem-smooth-visc-qpsh} 
For any integer $q\geq0$, a function $u\in\mathcal{C}^2(U)$ is \qpsh\ on 
a non-empty open set $U\subset\cbb^n$ if and only if for every function 
$\varphi\in\ccal^2(U)$, the operator $\ominus(H^{\cbb}\varphi)\leq{q}$ 
at those points $\hat{p}\in{U}$ where $u-\varphi$ attains its maximum. 
\end{lem}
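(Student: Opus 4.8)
The plan is to prove both directions by connecting the smooth $\ccal^2$ condition (Proposition~\ref{propqpsh}\,\eqref{qpsh-smooth}) with the viscosity formulation, exploiting that $u$ is itself $\ccal^2$, which makes the test-function machinery essentially a statement about touching functions from above. First I would establish the forward implication. Assume $u\in\ccal^2(U)$ is \qpsh. By Proposition~\ref{propqpsh}\,\eqref{qpsh-smooth}, the operator $\ominus(H^{\cbb}u)\leq q$ at every point of $U$. Now fix any $\varphi\in\ccal^2(U)$ and any point $\hat p$ where $u-\varphi$ attains a local maximum. Since both $u$ and $\varphi$ are $\ccal^2$, at such an interior maximum the first derivatives of $u-\varphi$ vanish and the (real) Hessian of $u-\varphi$ is negative semidefinite; translated to complex Hessians this gives the Löwner inequality $H^{\cbb}\varphi(\hat p)\geq H^{\cbb}u(\hat p)$. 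By the degenerate ellipticity of $\ominus$ established in Lemma~\ref{elliptic-degenerate}, the inequality $\acal\geq\bcal$ forces $\ominus(\acal)\leq\ominus(\bcal)$, whence
\[
\ominus\bigl(H^{\cbb}\varphi(\hat p)\bigr)\leq\ominus\bigl(H^{\cbb}u(\hat p)\bigr)\leq q,
\]
which is exactly the desired conclusion.

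For the converse I would argue contrapositively, or equivalently just take the obvious test function. Suppose the viscosity condition holds for every $\varphi\in\ccal^2(U)$; I want to deduce $\ominus(H^{\cbb}u)\leq q$ pointwise and then invoke Proposition~\ref{propqpsh}\,\eqref{qpsh-smooth} to conclude $u$ is \qpsh. The cleanest choice is $\varphi=u$ itself: then $u-\varphi\equiv0$ attains its maximum at every point of $U$, so the hypothesis applied at an arbitrary $\hat p\in U$ yields $\ominus(H^{\cbb}u(\hat p))=\ominus(H^{\cbb}\varphi(\hat p))\leq q$. Since $\hat p$ was arbitrary, the complex Hessian of $u$ has at most $q$ strictly negative eigenvalues everywhere on $U$, and Proposition~\ref{propqpsh}\,\eqref{qpsh-smooth} finishes the argument.

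The step I expect to require the most care is the passage at the maximum point $\hat p$ in the forward direction, namely deducing $H^{\cbb}\varphi(\hat p)\geq H^{\cbb}u(\hat p)$ in the Löwner sense. The subtlety is that the second-order necessary condition at an interior maximum naturally lives on the \emph{real} Hessian $H^{\rbb}(u-\varphi)\leq0$, and one must verify that the Hermitian (complex) Hessian inherits the same semidefiniteness. This follows because the map sending a real symmetric quadratic form to its associated Hermitian form (restricting the complexified form to the $(1,0)$-directions, or equivalently averaging over $z\mapsto e^{\iunit\theta}z$) is monotone and sends negative semidefinite forms to negative semidefinite Hermitian matrices; I would state this conversion explicitly so that the Löwner inequality is properly justified before feeding it into Lemma~\ref{elliptic-degenerate}. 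Everything else is a direct application of the two results already proved, so the lemma reduces to this eigenvalue-monotonicity bookkeeping together with the trivial choice $\varphi=u$ for the reverse direction.
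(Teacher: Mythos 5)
Your proposal is correct and follows essentially the same route as the paper: the converse via the trivial test function $\varphi\equiv u$ together with Proposition~\ref{propqpsh}\,(\ref{qpsh-smooth}), and the forward direction via the second-order condition $H^{\rbb}(u-\varphi)\leq0$ at $\hat p$, its transfer to the L\"owner inequality $H^{\cbb}u(\hat p)\leq H^{\cbb}\varphi(\hat p)$, and Lemma~\ref{elliptic-degenerate}. The one step you flag as delicate is precisely the step the paper carries out explicitly, through the identity $\overline{\xi}^{\,t}\,H^{\cbb}g\,\xi=\frac14\,\overline{\big[\xi^t,i\xi^t\big]}\,\big[H^{\rbb}g\big]\big[\xi;\,i\xi\big]$ (using that the real symmetric matrix $H^{\rbb}g\leq0$ is Hermitian negative semidefinite on complex vectors), which is an explicit form of your rotation-averaging argument.
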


\begin{proof} Firstly assume that for every $\varphi\in\ccal^2(U)$, 
the operator $\ominus(H^{\cbb}\varphi)\leq{q}$ at those points 
$\hat{p}\in{U}$ where $u-\varphi$ attains its maximum. Since 
$u\in\mathcal{C}^2(U)$, we can fix $\varphi\equiv{u}$, so that 
$u-\varphi$ obviously attains its maximum (zero) at every point 
$z\in{U}$. Whence, $u\equiv\varphi$ lies in $\PSH_q(U)$, because 
$\ominus(H^{\cbb}\varphi)\leq{q}$ at every $z\in{U}$ and according 
to (\ref{qpsh-smooth}) in Proposition~\ref{propqpsh}.

On the other hand, suppose now that $u\in\ccal^2(U)$ is \qpsh\ and take 
$\varphi\in\ccal^2(U)$, such that the difference $g=u-\varphi$ attains 
its maximum at some point $\hat{p}\in{U}$. Hence, when they are 
evaluated at $\hat{p}$, both the gradient $\nabla{g}=0$ vanishes and the 
real Hessian $H^{\rbb}g\leq0$ is negative semi-definite. We assert that 
the complex Hessian $H^{\cbb}\varphi$ in (\ref{complex-hessian}) has at 
most $q$ strictly negative eigenvalues at $\hat{p}$.

Recall that the real Hessian $H^{\rbb}g\in\rbb^{2n\times{2n}}$ is 
symmetric and Hermitian, so that the condition $H^{\rbb}g\leq0$ implies 
$\overline{v}^t[H^{\rbb}g]v\leq0$ for every vector $v\in\cbb^{2n}$. We 
can understand the relation between the real and complex Hessians by 
expanding the complex coordinates into real ones $z_k=x_k{+i}y_k$, so 
that $\cbb^n\cong\rbb^{2n}$. The following identities are easily 
satisfied for $z=\hat{p}$ and all vectors $\xi\in\cbb^n$,
\begin{eqnarray*}
\overline{\xi^t}\left[\frac{\partial^2g}
{\partial{z_k}\partial\overline{z_\ell}}\right]_{k,\ell}\!\xi
&=&\frac14\overline{\big[\xi^t,i\xi^t\big]}\begin{bmatrix}
\Big[\frac{\partial^2g}{\partial{x_k}\partial{x_\ell}}\Big]_{k,\ell}&
\Big[\frac{\partial^2g}{\partial{x_k}\partial{y_\ell}}\Big]_{k,\ell}\\[1ex] 
\Big[\frac{\partial^2g}{\partial{y_k}\partial{x_\ell}}\Big]_{k,\ell}&
\Big[\frac{\partial^2g}{\partial{y_k}\partial{y_\ell}}\Big]_{k,\ell}
\end{bmatrix}\begin{bmatrix}\xi\\{i}\xi\end{bmatrix}\\
&=&\frac14\overline{\big[\xi^t,i\xi^t\big]}\,\Big[H^{\rbb}g\Big]
\begin{bmatrix}\xi\\{i}\xi\end{bmatrix}\;\leq\;0.
\end{eqnarray*}

Thus, both inequalities $H^{\cbb}g\leq0$ and 
$H^{\cbb}u\leq{H^\cbb}\varphi$ hold, when the complex Hessians are 
all evaluated at $\hat{p}$. Notice that $\ominus(H^{\cbb}u)\leq{q}$ 
on $U$, because $u$ is \qpsh\ in $U$ and point (\ref{qpsh-smooth}) 
in Proposition~\ref{propqpsh}. A direct application of 
Lemma~\ref{elliptic-degenerate} yields the desired result: 
$\ominus(H^{\cbb}\varphi)\leq{q}$ at those points $\hat{p}$ 
where $u-\varphi$ attains its maximum. \end{proof}

We may now present the main definition of this work, the main idea is to 
rephrase the previous result, so as to get a proper definition for \usc\ 
functions instead of $\ccal^2$-differentiable ones.

\begin{defn}\label{def-visc-qpsh} 
Let $q\geq0$ be an integer, and $u:U\to[-\infty,\infty)$ be an upper 
semi-continuous function defined on some open set $U\subset\cbb^n$. We 
say that $u$ is \textit{\qpsh\ in the viscosity sense on $U$} whenever 
either $u\equiv-\infty$ on $U$ or for every $\varphi\in\ccal^2(U)$, the 
operator $\ominus(H^{\cbb}\varphi)\leq{q}$ at those points $\hat{p}$ 
where $u-\varphi$ attains its maximum; i.e., the complex Hessian 
$H^{\cbb}\varphi$ in (\ref{complex-hessian}) has at most $q$ strictly 
negative eigenvalues at those points $\hat{p}$.

In particular, if $u\not\equiv-\infty$, we must only consider those test 
functions $\varphi$ for which $u-\varphi$ can indeed attain its maximum 
at some $\hat{p}\in{U}$, so that $(u{-}\varphi)(\hat{p})$ and 
$u(\hat{p})$ are both (finite) real numbers.

For example, any upper semi-continuous function $u$ is \qpsh\ 
in the viscosity sense if $u\equiv-\infty$ or $q\geq{n}$, 
since $H^{\cbb}\varphi$ in (\ref{complex-hessian}) is an 
$[n{\times}n]$-Hermitian matrix when $\varphi\in\ccal^2(U)$. 
\end{defn}

We now devote the rest of this section to prove that the 
\textit{\qpshies} in the viscosity and classical senses coincide 
for general upper semi-continuous functions. We begin presenting 
the simpler implication.

\begin{prop}\label{prop-usc-visc-qpsh}   
Every function $u\in\PSH_q(U)$ is \qpsh\ in the viscosity sense on 
$U$, for any integer $q\geq0$ and non-empty open set $U\subset\cbb^n$. 
\end{prop}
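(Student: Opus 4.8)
The plan is to argue by contradiction, converting the algebraic hypothesis ``$H^{\cbb}\varphi$ has too many negative eigenvalues'' into a geometric violation of the defining maximum property of \subph\ functions. If $u\equiv-\infty$ the conclusion holds by Definition~\ref{def-visc-qpsh}, so assume $u\not\equiv-\infty$ and fix a test function $\varphi\in\ccal^2(U)$ for which $u-\varphi$ attains its maximum at some $\hat{p}\in U$ with $u(\hat{p})$ finite; by Definition~\ref{def-visc-qpsh} these are the only relevant test functions. Suppose, towards a contradiction, that $\ominus(H^{\cbb}\varphi)(\hat{p})\geq q+1$, i.e.\ the Hermitian matrix $H^{\cbb}\varphi(\hat{p})$ has at least $q+1$ strictly negative eigenvalues.

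First I would extract a $(q{+}1)$-dimensional complex direction on which $\varphi$ is strictly plurisuperharmonic. By the spectral theorem (min--max principle) there is a $(q{+}1)$-dimensional complex subspace $V\subset\cbb^n$ on which the Hermitian form $\xi\mapsto\overline{\xi}^t H^{\cbb}\varphi(\hat{p})\,\xi$ is negative definite, say bounded above by $-2\delta\|\xi\|^2$ for some $\delta>0$. Let $\pi=\hat{p}+V$ be the associated $(q{+}1)$-dimensional complex affine space through $\hat{p}$. Restricting $\varphi$ to the complex submanifold $\pi$, its complex Hessian at $\hat{p}$ is precisely the restriction of $H^{\cbb}\varphi(\hat{p})$ to $V$, hence negative definite; and since $\varphi\in\ccal^2$ this restricted Hessian varies continuously, so after shrinking I may fix a compact ball $\overline{B}\subset U\cap\pi$ centred at $\hat{p}$ on which the complex Hessian of $\varphi|_\pi$ stays $\leq-\delta\|\cdot\|^2$.

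Next I would build the competing function on $\pi$. For a small $\eps\in(0,\delta)$ set $h:=-\varphi|_\pi-\eps\|{\cdot}-\hat{p}\|^2$. Because the complex Hessian of $\|{\cdot}-\hat{p}\|^2$ restricts to the identity form on $V$, the complex Hessian of $h$ is bounded below by $(\delta-\eps)\|\cdot\|^2>0$ on $\overline{B}$, so $h$ is a \cont\ (indeed \spsh) function in a \nbh\ of the compact ball $\overline{B}\subset U\cap\pi$. On $\pi$ the sum reads $u|_\pi+h=(u-\varphi)|_\pi-\eps\|{\cdot}-\hat{p}\|^2$, which equals $(u-\varphi)(\hat{p})$ at $\hat{p}$ and, at every other $z\in\overline{B}$, is bounded by $(u-\varphi)(\hat{p})-\eps\|z-\hat{p}\|^2<(u-\varphi)(\hat{p})$, since $\hat{p}$ is a global maximum of $u-\varphi$. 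Thus $u|_\pi+h$ has a \emph{strict} interior maximum at $\hat{p}$ and is strictly smaller on the boundary $bB$.

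Finally, since $u\in\PSH_q(U)$ its restriction to $U\cap\pi$ is \subph, so by point~(\ref{defqpsh-1}) of Definition~\ref{defqpsh} the sum $u|_\pi+h$ must attain its maximum over $\overline{B}$ on $bB$. This contradicts the strict interior maximum just produced, forcing $\ominus(H^{\cbb}\varphi)(\hat{p})\leq q$; as $\varphi$ and $\hat{p}$ were arbitrary, $u$ is \qpsh\ in the viscosity sense. I expect the main obstacle to be the restriction bookkeeping of the second paragraph: verifying that the complex Hessian of $\varphi|_\pi$ at $\hat{p}$ is genuinely the restriction of the ambient form $H^{\cbb}\varphi(\hat{p})$ to $V$, and upgrading the pointwise negativity coming from $\ominus\geq q+1$ to a uniform negative bound on a whole ball $\overline{B}$. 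Once those are in place, the strictly plurisuperharmonic quadratic perturbation converts the nonstrict global maximum into the strict interior maximum that collapses the argument.
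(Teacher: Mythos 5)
Your proof is correct, but it takes a genuinely different route from the paper's. The paper also argues by contradiction with the quadratic perturbation $-\beta\|z-\hat{p}\|^2$, but it never leaves the ambient domain: it observes via the smooth characterisation (point~(\ref{qpsh-smooth}) of Proposition~\ref{propqpsh}) that $-\varphi$, and after perturbation $-\varphi-\beta\|z-\hat p\|^2$, is $(n{-}q{-}1)$-\psh\ near $\hat p$, then invokes the sum theorem (point~(\ref{qpsh-fcts}): $\PSH_q+\PSH_{n-q-1}\subset\PSH_{n-1}$) to conclude that $u-\varphi-\beta\|z-\hat p\|^2$ is $(n{-}1)$-\psh\ with a strict interior global maximum, contradicting the local maximum principle of point~(\ref{qpsh-loc-max}). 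You instead slice: you extract a $(q{+}1)$-dimensional affine space $\pi$ on which $H^{\cbb}\varphi(\hat p)$ is negative definite and contradict the defining boundary-maximum property of \subph\ functions (Definition~\ref{defqpsh}(\ref{defqpsh-1})) on $U\cap\pi$ directly, needing only the classical fact that a $\ccal^2$ function with positive definite complex Hessian is (strictly) \psh. What each buys: your version is more self-contained, bypassing the nontrivial sum theorem (a result of S{\l}odkowski) and the maximum principle for \qpsh\ functions, replacing them with linear algebra (Courant--Fischer) and the definition; the paper's version is shorter given its stated toolbox and avoids all restriction bookkeeping. Your two flagged worries are genuinely unproblematic: for a complex affine subspace, parametrising $\pi$ by $z=\hat p+L w$ with $L$ a linear isometry onto $V$ gives $H^{\cbb}(\varphi|_\pi)(w)=\overline{L}^{\,t}H^{\cbb}\varphi(\hat p+Lw)L$, i.e.\ exactly the restriction of the ambient form, and $\ccal^2$-continuity upgrades the pointwise bound $-2\delta$ to $-\delta$ on a small ball. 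One cosmetic repair: Definition~\ref{defqpsh}(\ref{defqpsh-1}) requires $h$ to be \psh\ on a \emph{\nbh} of $\overline{B}$, so arrange the bound $H^{\cbb}(\varphi|_\pi)\leq-\delta$ on a slightly larger ball and take $\overline{B}$ strictly inside it; also note that since $u(\hat p)\in\rbb$, $u\not\equiv-\infty$ on $U\cap\pi$, so the maximum property is indeed in force there.
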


\begin{proof} The result is trivial when $q\geq{n}$ or $u\equiv-\infty$ 
on $U$, because $u$ is \qpsh\ in the viscosity sense in both cases. 
Thus, we assume from now on that $q<n$ and $u(\hat{x})\in\rbb$ is 
finite for some $\hat{x}\in{U}$. We prove the result by assuming 
that $u$ is not \qpsh\ in the viscosity sense on $U$. Whence, 
following Definition~\ref{def-visc-qpsh} there is a test function 
$\varphi\in\ccal^2(U)$ such that $u-\varphi$ attains its maximum at 
some $\hat{p}\in{U}$, but the operator $\ominus(H^{\cbb}\varphi)>q$ at 
the point $\hat{p}$. In particular, $u(\hat{p})\in\rbb$ is finite and 
the negative complex Hessian $-H^{\cbb}\varphi$ has at least $q{+1}$ 
strictly positive eigenvalues at $\hat{p}$, so that $-H^{\cbb}\varphi$ 
has the same number of positive eigenvalues at every point in some 
small neighbourhood $\Omega_1$ of $\hat{p}$. According to point 
(\ref{qpsh-smooth}) in Proposition~\ref{propqpsh} this means that 
$-\varphi$ is $(n{-q-}1)$-\psh\ on $\Omega_1$.

Choose $\beta>0$ small enough such that 
$-\varphi(z){-}\beta\|z{-}\hat{p}\|^2$ is $(n{-q-}1)$-\psh\ on a maybe 
smaller neighbourhood $\Omega_2$ of $\hat{p}$; i.e., its complex Hessian 
has at least $q{+1}$ positive eigenvalues on $\Omega_2$. The sum 
$(u{-}\varphi)(z){-}\beta\|z{-}\hat{p}\|^2$ then attains its strict 
global maximum at $\hat{p}\in\Omega_2$ and it is $(n{-1})$-\psh\ on 
$\Omega_2$. This result is a consequence of point~(\ref{qpsh-fcts}) 
in Proposition \ref{propqpsh}, after considering that $u$ lies in 
$\PSH_q(U)$, that the evaluation $u(\hat{p})\in\rbb$ is finite, and that 
$u-\varphi$ attains its maximum at $\hat{p}$. The existence of a strict 
global maximum inside $\Omega_2$ contradicts the local maximum principle 
stated in the point (\ref{qpsh-loc-max}) of Proposition~\ref{propqpsh}, 
so that $u$ must be \qpsh\ in the viscosity sense on $U$. 
\end{proof}

We need the following result to prove the opposite implication to 
the previous proposition. This is a classical result in the theory 
of viscosity solutions, but we include its proof for the sake of 
completeness.

\begin{lem}\label{lem-extension}
Let $\Omega\subset{U}\subset\cbb^n$ be non-empty open sets, and 
$u:U\to[-\infty,\infty)$ be an upper semi-continuous function. 
Assume there is a function $\varphi\in\ccal^2(\Omega)$ such that the 
restriction of $u-\varphi$ to $\Omega$ attains its maximum at some point 
$\hat{p}\in\Omega$ and $u(\hat{p})\in\rbb$ is finite. Then there is a 
second function $\psi\in\ccal^2(U)$ such that $u-\psi$ has a strict 
global maximum at $\hat{p}$ in $U$ and its evaluation $\psi(\hat{p})$, 
gradient $\nabla\psi(\hat{p})$, and complex Hessian 
$H^{\cbb}\psi(\hat{p})$ coincide with the respective evaluation 
$\varphi(\hat{p})$, gradient $\nabla\varphi(\hat{p})$, and complex 
Hessian $H^{\cbb}\varphi(\hat{p})$ of the original function. 
\end{lem}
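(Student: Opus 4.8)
The plan is to modify $\varphi$ in two stages: first add a flat penalty term that turns the local maximum into a \emph{strict} one without disturbing the $2$-jet at $\hat{p}$, and then splice this modification into a globally defined $\ccal^2$ majorant of $u$ by means of a cut-off function, so that the resulting $\psi$ has a strict \emph{global} maximum on all of $U$ while still agreeing with $\varphi$ to second order at $\hat{p}$. Note that once the full real $2$-jet is preserved, the complex Hessian is automatically preserved, since by the $\tfrac14$-identity in the proof of Lemma~\ref{lem-smooth-visc-qpsh} the matrix $H^{\cbb}$ is determined by the second-order real derivatives.

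First I would fix a radius $r>0$ with $\overline{B_r(\hat{p})}\relc\Omega$ and set $\varphi_1(z)=\varphi(z)+\|z-\hat{p}\|^4$ on $\Omega$. Since the quartic term vanishes to fourth order at $\hat{p}$, its value, gradient, and real Hessian all vanish there, so $\varphi_1$ and $\varphi$ share the same evaluation, gradient, and complex Hessian at $\hat{p}$. Writing $c_0=(u-\varphi)(\hat{p})\in\rbb$, the hypothesis that $u-\varphi$ attains its maximum at $\hat{p}$ on $\Omega$ gives the quantitative bound $(u-\varphi_1)(z)\leq c_0-\|z-\hat{p}\|^4$ for all $z\in\Omega$; in particular $u-\varphi_1$ has a strict maximum at $\hat{p}$ over $\overline{B_r(\hat{p})}$.

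Next I would choose a cut-off $\chi\in\ccal^\infty(U)$ with $0\leq\chi\leq1$, $\chi\equiv1$ on $\overline{B_{r/2}(\hat{p})}$, and $\supp\chi\subset B_r(\hat{p})\relc\Omega$, together with a function $\Theta\in\ccal^2(U)$ satisfying $\Theta(z)>u(z)-c_0$ for every $z\in U$ (its existence is the crux, addressed below). Define $\psi=\chi\,\varphi_1+(1-\chi)\,\Theta$ on $U$, where $\chi\varphi_1$ is extended by zero outside $\supp\chi$. Because $\chi$ is compactly supported in $\Omega$, the product $\chi\varphi_1$ is $\ccal^2$ on all of $U$, hence $\psi\in\ccal^2(U)$. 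Since $\chi\equiv1$ near $\hat{p}$, we have $\psi\equiv\varphi_1$ there, so the evaluation, gradient, and complex Hessian of $\psi$ at $\hat{p}$ coincide with those of $\varphi$. For the strict global maximum I would check three regions: on $B_{r/2}(\hat{p})$ one has $u-\psi=u-\varphi_1\leq c_0-\|z-\hat{p}\|^4<c_0$ for $z\neq\hat{p}$, with equality exactly at $\hat{p}$; on the transition shell $r/2\leq\|z-\hat{p}\|\leq r$ both $\varphi_1>u-c_0$ (from the quantitative bound, as $\|z-\hat{p}\|^4\geq(r/2)^4$) and $\Theta>u-c_0$ hold, so the convex combination satisfies $\psi>u-c_0$; and on $U\setminus B_r(\hat{p})$ one has $\chi\equiv0$, whence $\psi=\Theta>u-c_0$. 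Together these give $u-\psi<c_0=(u-\psi)(\hat{p})$ off $\hat{p}$, i.e.\ a strict global maximum at $\hat{p}$.

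The main obstacle is the construction of the smooth strict majorant $\Theta$, which is exactly where the upper semi-continuity of $u$ is used. I would exhaust $U$ by relatively compact open sets $V_1\relc V_2\relc\cdots$ with $\bigcup_k V_k=U$ and $V_0=\emptyset$. On each compact $\overline{V_k}$ the upper semi-continuous function $u$ attains a finite supremum $m_k<\infty$ (here $u<\infty$ and $\overline{V_k}$ compact are essential). A standard smoothing argument then produces $\Theta\in\ccal^\infty(U)$ that exceeds the finite constant $m_k+1+|c_0|$ on the shell $\overline{V_k}\setminus V_{k-1}$ for every $k$; for instance one majorizes the piecewise-constant shell bound by a continuous function and mollifies it, or uses a locally finite partition of unity subordinate to the exhaustion. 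Since every $z\in U$ lies in some shell, on which $u(z)\leq m_k$ and hence $u(z)-c_0\leq m_k+|c_0|<\Theta(z)$, any such $\Theta$ satisfies $\Theta>u-c_0$ on $U$, completing the construction. The remaining verifications are the routine $2$-jet and sign computations indicated above.
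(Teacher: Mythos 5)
Your proof is correct, and it rests on the same two ingredients as the paper's own argument --- a penalty vanishing to order higher than two at $\hat{p}$ to make the maximum strict without disturbing the $2$-jet, and the upper semi-continuity of $u$ to dominate $u$ away from $\hat{p}$ --- but the globalisation step is genuinely different. The paper sets $\psi_0(z)=\varphi(z)+\|z-\hat{p}\|^6$ (your $\|z-\hat{p}\|^4$ plays the identical role, and both are smooth with vanishing $2$-jet at $\hat{p}$) and then extends $\psi_0$ \emph{inductively} over the sublevel sets $V_k=\{x\in U: u(x)<k\}$, asserting at each stage that a $\ccal^2$ extension $\psi_k$ exists for which $u-\psi_k$ keeps its strict global maximum at $\hat{p}$ on $\overline{V_k}$. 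You instead perform a \emph{one-shot} gluing: a cutoff $\chi$ supported in $B_r(\hat{p})\relc\Omega$ interpolates between $\varphi_1$ and a globally defined smooth strict majorant $\Theta>u-c_0$ on $U$, which you build on a compact exhaustion via a partition of unity. Your route buys explicitness exactly where the paper is sketchy: the paper's extension step is stated without proof, and its sets $\overline{V_k}$ need not be compact nor contained in $U$, so ``extend onto a neighbourhood of $\overline{V_k}$'' requires some care that the text leaves to the reader; your three-region verification (interior ball, transition shell where $\min\{\varphi_1,\Theta\}>u-c_0$ so any convex combination also exceeds $u-c_0$, and the exterior where $\psi=\Theta$) settles the strict global maximum concretely, with upper semi-continuity entering only through boundedness above on compacta --- a cleaner hypothesis than the structure of the sublevel sets $\{u<k\}$. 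What the paper's inductive scheme buys in exchange is that it never needs a global majorant of $u$ constructed up front. Both arguments preserve the full real $2$-jet at $\hat{p}$ and hence, as you correctly note via the identity relating $H^{\rbb}$ and $H^{\cbb}$ used in Lemma~\ref{lem-smooth-visc-qpsh}, the complex Hessian as well.
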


\begin{proof} Define the function 
$\psi_0(z):=\varphi(z)+\|z{-}\hat{p}\|^6$. One can easily see that 
the restriction of $u-\psi_0$ to $\Omega$ has a strict global maximum 
at $\hat{p}\in\Omega$ and its evaluation $\psi_0(\hat{p})$, gradient 
$\nabla\psi_0(\hat{p})$, and complex Hessian $H^{\cbb}\psi_0(\hat{p})$ 
coincide with the respective evaluation $\varphi(\hat{p})$, gradient 
$\nabla\varphi(\hat{p})$, and complex Hessian $H^{\cbb}\varphi(\hat{p})$ 
of the original function. We only need to extend $\psi_0$ as a 
$\ccal^2$-smooth function onto $U$ to conclude the proof. We do 
this by steeps.

Also define $V_k:=\{x\in{U}:u(x)<k\}$, so that 
$U=\bigcup_{k=1}^{\infty}V_k$. Since $u$ is upper semi-continuous, we 
have that $V_k$ is an open subset of $\cbb^n$ for each integer $k\geq1$. 
Take any compact ball $\overline{B}\subset\Omega$ with centre at 
$\hat{p}$, so that $\psi_0$ is $\ccal^2$-smooth in a neighbourhood 
of $\overline{B}$ and the restriction of $u-\psi_0$ to $\overline{B}$ 
attains its strict global maximum at $\hat{p}\in{B}$. Since $u$ is 
upper semi-continuous on $U$, it is bounded from above on the compact 
ball $\overline{B}$ by a large enough integer $k\geq1$, so that 
$\overline{B}\subset{V_k}$. All the previous properties imply 
that $\psi_0$ can be extended from $B$ onto a neighbourhood of 
$\overline{V_k}$ as a $\ccal^2$-smooth function $\psi_k$ such 
that the restriction of $u-\psi_k$ to $\overline{V_k}$ also 
has its strict global maximum at $\hat{p}$.

We can obviously carry on with this process, extending $\psi_j$ from 
$V_j$ onto a neighbourhood of $\overline{V_{j+1}}$ as a $\ccal^2$-smooth 
function $\psi_{j+1}$ such that the restriction of $u-\psi_{j+1}$ to 
$\overline{V_{j+1}}$ attains its strict global maximum at $\hat{p}$. 
Since $U=\bigcup_{k=0}^{\infty}V_k$, the above extension process yields 
a function $\psi\in\ccal^2(U)$, such that $u-\psi$ also has its strict 
global maximum at $\hat{p}$ and the functions $\psi$ and $\psi_0$ 
coincide inside the open ball $B$ centred at $\hat{p}$, as we 
wanted to prove. 
\end{proof}

A direct consequence of this lemma is that the \qpshy\ in the viscosity 
sense is a local property.

\begin{cor}\label{cor-visc-qpsh-local}
Let $q\geq0$ be an integer, and $u:U\to[-\infty,\infty)$ be an 
upper semi-continuous function defined on a non-empty open set 
$U\subset\cbb^n$. \begin{enumerate}

\item\label{visc-qpsh-local-1} 
If every point $x\in U$ has an open neighbourhood $\Omega\subset{U}$ 
such that the restriction $u|_\Omega$ is \qpsh\ in the viscosity sense 
on $\Omega$, then $u$ is also \qpsh\ in the viscosity sense on $U$.

\item\label{visc-qpsh-local-2} 
Given any non-empty open subset $\Omega\subset{U}$, if $u$ is \qpsh\ in 
the viscosity sense on $U$, the restriction $u|_\Omega$ is then \qpsh\ 
in the viscosity sense on $\Omega$. \end{enumerate} 
\end{cor}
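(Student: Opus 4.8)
The plan is to treat the two implications separately. In both cases the degenerate situation in which $u$ is identically $-\infty$ (on $U$ in part~(\ref{visc-qpsh-local-1}), or on $\Omega$ in part~(\ref{visc-qpsh-local-2})) is trivial, since such a function is \qpsh\ in the viscosity sense directly by Definition~\ref{def-visc-qpsh}; so I would dispose of it at the outset and assume thereafter that the function under consideration takes a finite value at the point where the relevant difference $u-\varphi$ is maximal.

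For part~(\ref{visc-qpsh-local-2}) I would argue through the extension lemma, since here the test functions live only on the smaller set. Suppose $u$ is \qpsh\ in the viscosity sense on $U$ and fix any $\varphi\in\ccal^2(\Omega)$ such that $u-\varphi$ attains its maximum over $\Omega$ at some point $\hat{p}\in\Omega$ with $u(\hat{p})\in\rbb$ finite. The obstruction is that $\varphi$ is defined only on $\Omega$, hence is not directly admissible as a test function for $u$ on $U$. I would remove this with Lemma~\ref{lem-extension}: it furnishes a function $\psi\in\ccal^2(U)$ whose complex Hessian at $\hat{p}$ equals $H^{\cbb}\varphi(\hat{p})$ and for which $u-\psi$ attains its strict global maximum at $\hat{p}$ in $U$. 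Now $\psi$ is a legitimate test function on $U$, so the viscosity hypothesis gives $\ominus(H^{\cbb}\psi)(\hat{p})\leq{q}$; and since $H^{\cbb}\psi(\hat{p})=H^{\cbb}\varphi(\hat{p})$, I conclude $\ominus(H^{\cbb}\varphi)(\hat{p})\leq{q}$, which is precisely the condition required by Definition~\ref{def-visc-qpsh} for $u|_\Omega$ on $\Omega$.

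For part~(\ref{visc-qpsh-local-1}) I expect no extension to be needed, because restricting a global test function to an open subset is harmless. Fix $\varphi\in\ccal^2(U)$ such that $u-\varphi$ attains its maximum over $U$ at some $\hat{p}\in{U}$ with $u(\hat{p})$ finite. By hypothesis $\hat{p}$ has an open neighbourhood $\Omega\subset{U}$ on which $u|_\Omega$ is \qpsh\ in the viscosity sense. The restriction $\varphi|_\Omega$ lies in $\ccal^2(\Omega)$, and since $\hat{p}\in\Omega\subset{U}$ while $u-\varphi$ is maximal over all of $U$ at $\hat{p}$, the difference $u|_\Omega-\varphi|_\Omega$ is a fortiori maximal over $\Omega$ at $\hat{p}$, with the same finite value $u(\hat{p})$. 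The local viscosity hypothesis then yields $\ominus(H^{\cbb}\varphi)(\hat{p})\leq{q}$, as desired.

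The argument is thus entirely routine once Lemma~\ref{lem-extension} is available; the only genuine work is localised in that lemma, which is exactly what lets us pass from a test function defined merely on $\Omega$ to one defined on all of $U$ without disturbing its complex Hessian at the maximum point. I would only remain mildly careful that in each step the maximum point $\hat{p}$ is interior to the set over which the maximum is taken, so that the viscosity condition is genuinely being tested there, and that $u(\hat{p})$ is finite, so that $\hat{p}$ is an admissible point in the sense of Definition~\ref{def-visc-qpsh}.
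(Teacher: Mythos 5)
Your proposal is correct and follows essentially the same route as the paper's own proof: part~(\ref{visc-qpsh-local-2}) via Lemma~\ref{lem-extension} to extend the local test function $\varphi$ to a global $\psi\in\ccal^2(U)$ with matching complex Hessian at $\hat{p}$, and part~(\ref{visc-qpsh-local-1}) by restricting the global test function to the neighbourhood $\Omega$, with the trivial case $u\equiv-\infty$ handled separately in each part. No substantive difference from the published argument.
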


\begin{proof} Statement in point~(\ref{visc-qpsh-local-1}) trivially 
holds when $u\equiv-\infty$ on $U$; otherwise, this point is proved by 
taking any function $\psi\in\ccal^2(U)$ such that $u-\psi$ attains its 
maximum at some point $\hat{p}\in{U}$ with $u(\hat{p})\in\rbb$. Let 
$\Omega$ be an open neighbourhood of $\hat{p}$ in $U$ such that the 
restriction $u|_\Omega$ is \qpsh\ in the viscosity sense in $\Omega$. 
Since the restriction of $u-\psi$ to $\Omega$ also attains its maximum 
at $\hat{p}\in\Omega$, Definition~\ref{def-visc-qpsh} yields that the 
operator $\ominus(H^{\cbb}\psi)\leq{q}$ at $\hat{p}$, and so $u$ is 
\qpsh\ in the viscosity sense in $U$ by the same definition. 


Result in point~(\ref{visc-qpsh-local-2}) trivially holds when 
$u\equiv-\infty$ on $\Omega$; otherwise, this point is proved by taking 
any function $\varphi\in\ccal^2(\Omega)$ such that $u|_\Omega-\varphi$ 
attains its maximum at some point $\hat{p}\in\Omega$ with 
$u(\hat{p})\in\rbb$. Lemma~\ref{lem-extension} yields the existence of 
$\psi\in\ccal^2(U)$ such that $u-\psi$ has a strict global maximum at 
$\hat{p}\in{U}$ and the complex Hessians $H^{\cbb}\psi(\hat{p})$ and 
$H^{\cbb}\varphi(\hat{p})$ coincide. Since $u$ is \qpsh\ in the 
viscosity sense on $U$, the Hessians $H^{\cbb}\varphi=H^{\cbb}\psi$ 
have at most $q$ strictly negative eigenvalues at $\hat{p}$, and so 
$u|_\Omega$ is also \qpsh\ in the viscosity sense on $\Omega$. 
\end{proof}

We may now prove that the \textit{\qpshies} in the viscosity and 
classical senses coincide for general upper semi-continuous functions.
This result was inspired by Alvarez, Lasry, and Lions paper 
\cite{ALL-convex}, where they shown that an \usc\ function $u$ is 
convex if and only if its real Hessian $H^{\rbb}u$ has no strictly 
negative eigenvalues in the viscosity sense.

\begin{thm}\label{thm-usc-visc-qpsh}
Let $q\geq0$ be any integer, and $U\subset\cbb^n$ be a non-empty open 
set. An upper semi-continuous function $u:U\to[-\infty,\infty)$ is 
\qpsh\ on $U$ if and only if it is \qpsh\ in the viscosity sense on $U$. 
\end{thm}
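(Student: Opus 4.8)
The forward implication is exactly Proposition~\ref{prop-usc-visc-qpsh}, so the plan is to establish only the converse: if $u$ is \qpsh\ in the viscosity sense on $U$, then $u\in\PSH_q(U)$. The cases $u\equiv-\infty$ and $q\geq n$ are immediate, so I would assume $0\leq q<n$ and $u\not\equiv-\infty$. By Definition~\ref{defqpsh} it suffices to prove that the restriction $u|_{U\cap\pi}$ is \subph\ on $U\cap\pi$ for every $(q{+}1)$-dimensional complex affine space $\pi\cong\cbb^{q+1}$; and by S{\l}odkowski's Lemma~\ref{sub-ph=(N-1)-psh}, applied in the ambient dimension $q{+}1$, this in turn amounts to checking that $u|_{U\cap\pi}-\Re[\wp]$ attains its maximum on $bB$ for every holomorphic polynomial $\wp$ on $\pi$ and every compact ball $\overline{B}\subset U\cap\pi$. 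I would argue by contradiction: suppose that for some slice $\pi$, some $\wp$, and some ball the function $g:=u|_\pi-\Re[\wp]$ has maximum value $M=\max_{\overline{B}}g$ attained at an interior point $p_0\in B$, while its boundary maximum $M'=\max_{bB}g$ satisfies $M'<M$.

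The heart of the argument is to manufacture from this gap a $\ccal^2$ test function on $U$ whose complex Hessian carries exactly $q{+}1$ strictly negative eigenvalues at an interior maximum of $u-\varphi$, contradicting the viscosity inequality $\ominus(H^{\cbb}\varphi)\leq q$. Writing $P$ for the orthogonal projection of $\cbb^n$ onto $\pi$, I would set
\[
\varphi(z)=\Re[\wp(Pz)]-\lambda\|Pz-p_0\|^2+C\|z-Pz\|^2,
\]
with $\lambda>0$ small and $C>0$ large. Since $\Re[\wp(Pz)]$ is \ph, its complex Hessian vanishes, and the two quadratic terms produce the block-diagonal Hessian $H^{\cbb}\varphi\equiv\mathrm{diag}(-\lambda I_{q+1},\,C\,I_{n-q-1})$, which has precisely $q{+}1$ strictly negative eigenvalues at every point, so $\ominus(H^{\cbb}\varphi)\equiv q{+}1>q$. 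On the slice one has $u-\varphi=g+\lambda\|{\cdot}-p_0\|^2$, and choosing $\lambda<(M-M')/(\mathrm{diam}\,B)^2$ forces the maximum of this function over $\overline{B}$ to be attained at an interior point, because its boundary values do not exceed $M'+\lambda(\mathrm{diam}\,B)^2<M=(u-\varphi)(p_0)$. The transverse penalty $C\|z-Pz\|^2$ then drives $u-\varphi$ downward off the slice, so that for $C$ large the maximum of $u-\varphi$ over a small cylindrical neighbourhood of $p_0$ in $U$ is still attained at an interior point $\hat{p}$. By the locality of the viscosity notion (Corollary~\ref{cor-visc-qpsh-local}) the defining inequality applies at $\hat{p}$, giving $\ominus(H^{\cbb}\varphi(\hat{p}))\leq q$, which contradicts $\ominus(H^{\cbb}\varphi)\equiv q{+}1$.

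The step I expect to be the main obstacle is guaranteeing that the maximum of $u-\varphi$ over the chosen neighbourhood is genuinely attained at an interior point, rather than escaping to its boundary, for a function $u$ that is only \usc. Upper semicontinuity and compactness ensure that the maximum exists; the strict gap $M>M'$ provides the room needed in the $q{+}1$ slice directions for small $\lambda$; and the penalty $C\|z-Pz\|^2$ controls the $n-q-1$ transverse directions for large $C$. Making these two balances simultaneously compatible---and ruling out maxima on the lateral boundary of the cylinder, where the slice and transverse effects interact---is the delicate part; to secure it cleanly I would first invoke Lemma~\ref{lem-extension} to replace $\wp$ by a test function for which the slice maximum at $p_0$ is already strict, and then absorb the off-slice behaviour into the penalty. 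Once the interior maximum is pinned down, the eigenvalue count is automatic and the contradiction closes the proof.
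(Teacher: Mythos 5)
Your proposal is correct and follows essentially the same route as the paper's proof: the same reduction through Definition~\ref{defqpsh} and S{\l}odkowski's Lemma~\ref{sub-ph=(N-1)-psh}, the same test function $\Re[\wp]-\lambda\|x-p_0\|^2+C\|y\|^2$ with $q{+}1$ forced negative eigenvalues, and the same shrinking-cylinder/upper-semicontinuity balancing (small $\lambda$, large $C$) to pin the maximum of $u-\varphi$ in the interior. The only cosmetic differences are that you treat all $q<n$ uniformly via the projection $P$ where the paper separates $q=n{-}1$ from $q\leq n{-}2$, and that you localize via Corollary~\ref{cor-visc-qpsh-local} where the paper applies Lemma~\ref{lem-extension} directly (the corollary is itself derived from that lemma, and strictness of the slice maximum is not actually needed).
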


\begin{proof} The result is trivial when $q\geq{n}$ or 
$u\equiv-\infty$ on $U$. Moreover, it is already shown in 
Proposition~\ref{prop-usc-visc-qpsh} that every $u\in\PSH_q(U)$ 
is \qpsh\ in the viscosity sense. We assume from now on that both 
$u\not\equiv-\infty$ is \qpsh\ in the viscosity sense on $U$ and $q<n$. 
We show that $u\in\PSH_q(U)$ by considering the cases introduced in the 
main Definition~\ref{defqpsh}. The first case happens when $q$ is equal 
to $n{-1}$; and so we introduce the test function
\begin{equation}\label{eqn1-usc-visc-qpsh}
\varphi_\beta(z):=\Re[\wp](z)-\beta\|z\|^2
\quad\hbox{for}\quad{z}\in\cbb^n,  
\end{equation}
where $\wp:\cbb^n\to\cbb$ is an arbitrary holomorphic 
polynomial and $\beta\geq0$ is a real number. According to 
Lemma~\ref{sub-ph=(N-1)-psh}, $u$ lies in $\PSH_{n-1}(U)$, whenever 
the maxima $\theta_1\geq\theta_2$ below are equal for $\beta=0$ and 
any non-trivial compact ball $\overline{B}\subset{U}$ and arbitrary 
holomorphic polynomial $\wp$,
\begin{equation}\label{eqn2-usc-visc-qpsh}
\theta_1:=\max_{\overline{B}}(u-\varphi_0)\quad
\hbox{and}\quad\theta_2:=\max_{bB}(u-\varphi_0).
\end{equation}

Notice that both $\theta_1$ and $\theta_2$ exist, because $u$ 
is \usc, but $\theta_1$ or $\theta_2$ may be equal to $-\infty$. 
Suppose by way of contradiction that $\theta_1>\theta_2$, so that the 
restriction of $u-\varphi_0$ to $\overline{B}$ does not attain its 
maximum $\theta_1\in\rbb$ at the boundary $bB$. One can easily choose 
$\beta>0$ small enough, so that the restriction of $u-\varphi_\beta$ 
to $\overline{B}$ does not attain its maximum at $bB$ either, but this 
maximum $\theta_3\in\rbb$ is indeed attained at some point $\hat{p}$ 
in the interior $B$. Lemma~\ref{lem-extension} yields the existence 
of $\psi\in\ccal^2(U)$ such that $u-\psi$ also has its strict global 
maximum at $\hat{p}\in{U}$ and the complex Hessians 
$H^{\cbb}\psi(\hat{p})$ and $H^{\cbb}\varphi_\beta(\hat{p})$ coincide. 
Since $u$ is $(n{-1})$-\psh\ in the viscosity sense on $U$, the Hessians 
$H^{\cbb}\psi=H^{\cbb}\varphi_\beta$ must have at most $n{-1}$ strictly 
negative eigenvalues at the point $\hat{p}$ according to 
Definition~\ref{def-visc-qpsh}. We obviously have a contradiction, 
because $H^{\cbb}\varphi_\beta$ in (\ref{eqn1-usc-visc-qpsh}) is 
equal to $-\beta<0$ times the $[n{\times}n]$-identity matrix. Whence, 
$\theta_1=\theta_2$ in (\ref{eqn2-usc-visc-qpsh}), and so $u$ lies 
in $\PSH_{n-1}(U)$ according to Lemma~\ref{sub-ph=(N-1)-psh}.

We now consider the case when $0\leq{q}\leq{n-}2$. Let 
$\pi\subset\cbb^n$ be any $(q{+1})$-di\-men\-sio\-nal complex affine 
space such that $U\cap\pi$ is not empty. We can assume without loss 
of generality that $\pi$ is of the form $\cbb^{q+1}{\times}\{0\}$ 
for some particular system of coordinates $(x,y)$ in 
$\cbb^{q+1}{\times}\cbb^{n-q-1}$, since the eigenvalues of complex 
Hessians are invariant under holomorphic rotations and translations. 
Obviously, we have nothing to prove when $u$ is identically equal to 
$-\infty$ in $U\cap\pi$; otherwise, we follow the ideas presented in 
the previous paragraphs. Thus, $u\in\PSH_q(U)$, whenever the maxima 
$\theta_1\geq\theta_2$ below are equal for any holomorphic polynomial 
$\wp:\cbb^n\to\cbb$ and non-empty open ball $B\subset\cbb^{q+1}$ with 
product $\overline{B}{\times}\{0\}$ contained in $U\cap\pi$,
\begin{equation}\label{eqn3-usc-visc-qpsh}
\theta_1:=\max_{\overline{B}\times\{0\}}\Upsilon\quad
\hbox{and}\quad\theta_2:=\max_{bB\times\{0\}}\Upsilon
\quad\hbox{for}\quad\Upsilon:=u-\Re[\wp].
\end{equation}

Notice that both $\theta_1$ and $\theta_2$ exist, because $u$ 
is \usc, but $\theta_1$ or $\theta_2$ may be equal to $-\infty$. 
Suppose by way of contradiction that $\theta_1>\theta_2$, so that the 
restriction of $\Upsilon$ to $\overline{B}{\times}\{0\}$ does not attain 
its maximum $\theta_1\in\rbb$ at the subset $bB{\times}\{0\}$. Since 
$\Upsilon$ is also \usc, we can find a second non-empty open ball $D$ 
in $\cbb^{n-q-1}$ with centre at the origin and radius small enough, 
such that
$$\overline{B{\times}D}\subset{U}\quad\hbox{and}
\quad\max_{\overline{B{\times}D}}\Upsilon>\theta_1>
\max_{bB\times\overline{D}}\Upsilon>\theta_2,$$
where $\Upsilon=u-\Re[\wp]$. Now define for 
$z=(x,y)\in\cbb^{q+1}{\times}\cbb^{n-q-1}$,
\begin{equation}\label{eqn4-usc-visc-qpsh} 
\varphi(x,y):=\Re[\wp](x,y)-\beta\|x\|^2+\delta\|y\|^2. 
\end{equation}

Again, since $u$ is \usc, we can choose $\beta>0$ small enough and 
$\delta>0$ large enough, so that the restriction of $u-\varphi$ to 
$\overline{B{\times}D}$ does not attain its maximum at the boundary 
$b(B{\times}D)$ either, but this maximum $\theta_3\in\rbb$ is indeed 
attained at some point $\hat{p}$ in the interior $B{\times}D$. 
Lemma~\ref{lem-extension} yields the existence of $\psi\in\ccal^2(U)$ 
such that $u-\psi$ also has its strict global maximum at $\hat{p}\in{U}$ 
and the complex Hessians $H^{\cbb}\psi(\hat{p})$ and 
$H^{\cbb}\varphi(\hat{p})$ coincide. Since $u$ is \qpsh\ in the 
viscosity sense on $U$, the Hessians $H^{\cbb}\psi=H^{\cbb}\varphi$ 
must have at most $q$ strictly negative eigenvalues at the point 
$\hat{p}$ according to Definition~\ref{def-visc-qpsh}. We 
obviously have a contradiction, because $H^{\cbb}\varphi(\hat{p})$ in 
(\ref{eqn4-usc-visc-qpsh}) has $q{+1}$ strictly negative eigenvalues 
coming from the term $-\beta\|x\|^2$. Hence, $\theta_1=\theta_2$ in 
(\ref{eqn3-usc-visc-qpsh}), and so $u$ lies in $\PSH_q(U)$ according 
to Definition~\ref{defqpsh}.
\end{proof}

\section{Basic properties of the sup-convolution} 

We have so far characterised the \qpsh\ functions as \usc\ viscosity 
subsolutions to the elliptic degenerate operator $\ominus$ introduced 
in Lemma~\ref{elliptic-degenerate}. One of the main consequences of 
this result is that we can apply the theory of viscous solutions into 
the field of several complex variables, as it was initially done by 
S{\l}odkowski and Tomassini in \cite{SlTo}, but we must be careful, 
because the operator $\ominus$ is not continuous. We recommend again 
to the interested reader the manuscripts of Crandall \cite{Crandall}, 
Crandall et al. \cite{CIL-viscosity}, Koike \cite{Koike}, and 
Katzourakis \cite{Katzourakis}, for a general introduction to 
the theory of viscous solutions.

One of the most important tools in the theory of viscosity solutions 
is the sup-convolution introduced by Moreau in 1963 \cite{Moreau1963, 
Moreau1966}. It plays the role represented by the classical integral 
convolution in the theory of linear differential equations. Actually, 
it is well known that the sup-convolution takes an \usc\ function $u$ 
(which may not be integrable) and in general produces a continuous one 
$u^\Phi$. In particular, if $\Phi$ is of the form $-k\|{\cdot}\|^2$, 
the sup-convolution then satisfies $u^\Phi\geq{u}$, the sum 
$u^\Phi{+k}\|{\cdot}\|^2$ is convex, and the sequence 
$\{u^\Phi\}_{k=1}^\infty$ is non-increasing and converges pointwise to 
$u$ as $k\to\infty$; see for example Chapter~4 of \cite{Katzourakis} 
or Section~11 of \cite{Crandall}. Moreover, one also has the so called 
\textit{magic property}: the sup-convolution of a viscosity subsolution 
is again a viscosity subsolution.

We include in this chapter a fast introduction to the sup-convolution 
and its properties for the sake of completeness, but the reader who 
already knows the results can skip this chapter. Recall the original 
Moreau definition \cite{Moreau1963,Moreau1966}.

\begin{defn}\label{def-sup-conv}
Let $u:X\to[-\infty,\infty)$ and $\Phi:\cbb^n\to[-\infty,\infty)$ be 
a pair of \usc\ functions, where $X\subset\cbb^n$ is a non-empty set. 
The \textit{sup-convolution} is defined below for every $y\in\cbb^n$,
\begin{equation}\label{eqn-sup-conv}
{u_X^\Phi}(y):=\sup_{x\in{X}}
\big\{u(x)+\Phi(y{-x})\}\in[-\infty,\infty].
\end{equation}

We simply write $u^\Phi$ instead of $u_X^\Phi$ when it is clear 
on which set the function $u$ is defined. 
\end{defn}

Even when the sup-convolution is well defined for every $y\in\cbb^n$, 
the most interesting results appear at those points $y\in\cbb^n$ for 
which the supremum in (\ref{eqn-sup-conv}) can be attained at some 
$\hat{x}\in{X}$. Thus, we recall the semiconvex functions and the 
proper sets of definition for the sup-convolution.

\begin{defn}\label{def-proper-set} 
Let $u:X\to[-\infty,\infty)$ and $\Phi:\cbb^n\to[-\infty,\infty)$ be 
both \usc, where $X\subset\cbb^n$ is non-empty. The \textit{proper set 
of definition} for the sup-convolution $u_X^\Phi$ is composed by those 
points $y\in\cbb^n$ for which there is $\hat{x}\in{X}$, such that the 
supremum $u_X^\Phi(y)$ in (\ref{eqn-sup-conv}) is equal to 
$\Phi({y-}\hat{x}){+}u(\hat{x})$.

Besides, a function $\Psi:\cbb^n\to\rbb\cup\{\infty\}$ is said to be 
\textit{semiconvex with constant $\delta>0$} if and only if the sum 
$\Psi{+}\delta\|{\cdot}\|^2$ is convex on $\cbb^n$.
\end{defn}

Notice that the classical definition for convexity is extended 
to consider functions with image in $\rbb\cup\{\infty\}$. The 
sup-convolution has some simple but interesting properties. 
The proofs are included for the sake of completeness.

\begin{lem}\label{lem-sup-conv} 
Let $u:X\to[-\infty,\infty)$ and $\Phi:\cbb^n\to[-\infty,\infty)$ be a 
pair of \usc\ functions, where $X\subset\cbb^n$ is a non-empty set. The 
statements below hold up for the sup-convolution $u^\Phi=u_X^\Phi$ in 
(\ref{eqn-sup-conv}).
\begin{enumerate}
\item\label{lem-sup-conv-1} The inequality $u\leq{u}^\Phi$ 
is satisfied on $X$, whenever $\Phi(0)=0$.

\item\label{lem-sup-conv-2} If $\Phi\leq0$ on $\cbb^n$ and there is 
$M\in\rbb$ with $u\leq{M}$ on $X$, then $u^\Phi\leq{M}$ is also bounded 
from above on $\cbb^n$ by the same constant.

\item\label{lem-sup-conv-3} One has that $u_X^\Phi>-\infty$ on $\cbb^n$ 
if and only if neither $\Phi$ nor $u$ is identically equal to $-\infty$ 
on their respective set of definition.

\item\label{lem-sup-conv-4} Let $Y\subset\cbb^n$ be the proper set 
of definition for $u_X^\Phi$. If neither $\Phi$ nor $u$ is identically 
equal to $-\infty$, then the image $u_X^\Phi(Y)\subset\rbb$ does not 
contain the extremal values $\pm\infty$.

\item\label{lem-sup-conv-5} If $\Phi\in\ccal(\cbb^n)$ is semiconvex 
with constant $\delta>0$ on $\cbb^n$ and $u\not\equiv-\infty$, then 
$u_X^\Phi$ is also semiconvex with constant $\delta>0$ on $\cbb^n$.

\item\label{lem-sup-conv-7} Let $f:\rbb\to[-\infty,\infty)$ be \usc\ 
and monotonically decreasing. Assume that $X\subset\cbb^n$ is closed,
$\mathrm{dist}_X$ is the Euclidean distance to $X$, and 
$\Phi=f(\|{\cdot}\|)$. The following identities hold on $y\in\cbb^n$,
\begin{equation}\label{charac-funct-1}
\chi_{\cbb^n}^\Phi=\chi_X^\Phi=f\circ
\mathrm{dist}_X\quad\hbox{for}\quad\chi(y):=
\left\{\begin{array}{cl}0&\hbox{if~}y\in{X};\\
-\infty&\hbox{otherwise}.\end{array}\right.
\end{equation}
\end{enumerate}
\end{lem}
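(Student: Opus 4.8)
The plan is to read the first two assertions straight off the defining supremum~(\ref{eqn-sup-conv}), and then to handle the extended-real bookkeeping, the semiconvexity, and the distance formula in turn. For part~(\ref{lem-sup-conv-1}) I would simply test the supremum defining $u^\Phi(x_0)$ at the single index $x=x_0$: since $\Phi(0)=0$, that term already equals $u(x_0)$, so $u^\Phi(x_0)\ge u(x_0)$ for every $x_0\in X$. Part~(\ref{lem-sup-conv-2}) is an equally direct termwise bound: under $\Phi\le0$ and $u\le M$ each summand $u(x)+\Phi(y{-}x)$ is at most $M+0=M$, hence so is the supremum, for every $y\in\cbb^n$.

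For part~(\ref{lem-sup-conv-3}) I would prove the forward implication by contraposition: if $u\equiv-\infty$ on $X$ or $\Phi\equiv-\infty$ on $\cbb^n$, then---recalling that neither function ever takes the value $+\infty$---every summand $u(x)+\Phi(y{-}x)$ equals $-\infty$, so the supremum is $-\infty$. Conversely, if $u(x_0)\in\rbb$ and $\Phi(w_0)\in\rbb$ for some $x_0\in X$, $w_0\in\cbb^n$, then testing the supremum defining $u^\Phi(x_0{+}w_0)$ at $x=x_0$ yields the finite value $u(x_0)+\Phi(w_0)$, so $u_X^\Phi$ does not collapse to $-\infty$. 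For part~(\ref{lem-sup-conv-4}) I fix $y$ in the proper set $Y$, so $u^\Phi(y)=\Phi(y{-}\hat x)+u(\hat x)$ for some $\hat x\in X$; since $\Phi,u<+\infty$ this value cannot be $+\infty$, while the finite lower bound produced in part~(\ref{lem-sup-conv-3}) excludes $-\infty$, leaving the value in $\rbb$. Tracking the two extremal values $\pm\infty$ simultaneously is the one genuinely delicate point here, and it is precisely why the hypotheses forbid $u$ and $\Phi$ from degenerating to $-\infty$.

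The substantive step, and what I expect to be the main obstacle, is the semiconvexity in part~(\ref{lem-sup-conv-5}): the real work is finding the algebraic rearrangement that exposes $u^\Phi+\delta\|{\cdot}\|^2$ as a supremum of convex functions of $y$. Writing $g:=\Phi+\delta\|{\cdot}\|^2$, which is convex by hypothesis, and using $\delta\|y\|^2=\delta\|y{-}x\|^2+2\delta\la y,x\ra-\delta\|x\|^2$, one obtains
\[
u^\Phi(y)+\delta\|y\|^2=\sup_{x\in X}\Big[\big(u(x)-\delta\|x\|^2\big)+g(y{-}x)+2\delta\la y,x\ra\Big].
\]
For each fixed $x$ the bracket is convex in $y$ (a translate of the convex $g$, plus the real-linear term $2\delta\la y,x\ra$, plus a constant), so the supremum over $x$ is convex; the hypothesis $u\not\equiv-\infty$ guarantees, via the finite term at some $x_0$, that this supremum never equals $-\infty$, so $u^\Phi$ is genuinely semiconvex with the same constant $\delta$.

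Finally, for part~(\ref{lem-sup-conv-7}) I would establish the two equalities separately. Since $\chi\equiv-\infty$ off $X$ and $\Phi<+\infty$, every index $x\notin X$ contributes $-\infty$ to the defining supremum, so enlarging the index set from $X$ to $\cbb^n$ changes nothing, giving $\chi_{\cbb^n}^\Phi=\chi_X^\Phi$. For $x\in X$ one has $\chi(x)=0$, whence $\chi_X^\Phi(y)=\sup_{x\in X}f(\|y{-}x\|)$. Because $X$ is closed, the infimum $\mathrm{dist}_X(y)=\inf_{x\in X}\|y{-}x\|$ is attained at a nearest point $x^\ast$, and because $f$ is monotonically decreasing, every other index satisfies $f(\|y{-}x\|)\le f(\mathrm{dist}_X(y))=f(\|y{-}x^\ast\|)$; hence the supremum equals $f(\mathrm{dist}_X(y))$, so $\chi_X^\Phi=f\circ\mathrm{dist}_X$. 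The upper semicontinuity of $f$ enters only to ensure that $\Phi=f(\|{\cdot}\|)$ is itself \usc, so that the sup-convolution framework of Definition~\ref{def-sup-conv} applies.
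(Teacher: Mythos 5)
Your proposal follows the paper's proof essentially step for step: parts (\ref{lem-sup-conv-1}) and (\ref{lem-sup-conv-2}) by testing, respectively bounding, the supremum termwise; part (\ref{lem-sup-conv-5}) by exactly the paper's rearrangement, exhibiting $u_X^\Phi(y)+\delta\|y\|^2$ as the supremum of the convex functions $y\mapsto u(x)+\Phi(y{-}x)+\delta\|y\|^2$ (the paper indexes only those $x\in X$ with $u(x)\in\rbb$, which your remark about discarding the $-\infty$ terms covers); and part (\ref{lem-sup-conv-7}) by the computation (\ref{eqn2-lem-sup-conv}), where your use of the closedness of $X$ to produce a nearest point $x^\ast$ is the correct justification of the last equality, left implicit in the paper (upper semi-continuity of $f$ alone would not give $\sup_{x\in X}f(\|y{-}x\|)=f(\mathrm{dist}_X(y))$, since a decreasing \usc\ $f$ need not be right-continuous).

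One caveat concerning (\ref{lem-sup-conv-3})--(\ref{lem-sup-conv-4}): your witness argument establishes $u_X^\Phi(x_0{+}w_0)>-\infty$ at a single point, i.e.\ $u_X^\Phi\not\equiv-\infty$, whereas both the statement and your subsequent appeal in part (\ref{lem-sup-conv-4}) read it as a bound at \emph{every} $y\in\cbb^n$. For a merely \usc\ $\Phi$ the pointwise bound can genuinely fail: take $X=\{0\}$, $u(0)=0$, and $\Phi$ equal to $0$ at one point and $-\infty$ elsewhere (this $\Phi$ is \usc); then $u_X^\Phi(y)=\Phi(y)$ equals $-\infty$ at all but one point, and the supremum is even trivially attained there, so the value at a point of the proper set $Y$ need not lie in $\rbb$. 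To exclude $-\infty$ at a given $y\in Y$ one really needs $\Phi(y{-}x_0)>-\infty$ for some $x_0$ with $u(x_0)\in\rbb$, which does hold in every later application of the lemma, where $\Phi$ is continuous or semiconvex and hence finite everywhere. This is not a defect of your reconstruction relative to the paper: the paper's proof of (\ref{lem-sup-conv-3}) is a single unargued sentence, and its proof of (\ref{lem-sup-conv-4}) makes exactly the same leap; but be aware that what your argument actually proves is the $\not\equiv-\infty$ version, the everywhere version requiring finiteness of $\Phi$.
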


Recall that $\langle{x,y}\rangle=\sum_{k=1}^n\Re(x_k\overline{y_k})$ 
denotes the standard real inner product between vectors $x$ and $y$ 
in $\cbb^n$. Moreover, the notation $u\equiv-\infty$ means that $u$ 
is identically equal to $-\infty$ on its set of definition $X$.

\begin{proof} The inequality $u\leq{u^\Phi}$ in statement 
(\ref{lem-sup-conv-1}) holds up after fixing $x=y$ into formula 
(\ref{eqn-sup-conv}). The inequality $u^\Phi\leq{M}$ in point 
(\ref{lem-sup-conv-2}) is proved by setting $u\leq{M}$ into 
(\ref{eqn-sup-conv}), so that for every $y\in\cbb^n$ 
$$u^\Phi(y)\leq\sup_{x\in{X}}\big\{M+\Phi(y{-x})\big\}\leq{M}.$$

The result in point (\ref{lem-sup-conv-3}) follows from the fact 
that the sup-convolution $u^\Phi$ is calculated with the supremum. 
The result in statement (\ref{lem-sup-conv-4}) is proved by cases. 
Given any point $y$ in the proper set of definition for $u_X^\Phi$, 
there is $\hat{x}\in{X}$ such that $u_X^\Phi(y)$ is equal to 
$u(\hat{x}){+}\Phi({y-}\hat{x})$. Thus, $u_X^\Phi(y)<\infty$, because 
$u<\infty$ and $\Phi<\infty$ on their respective sets of definition. 
Besides, point~(\ref{lem-sup-conv-3}) yields that $u_X^\Phi>-\infty$ 
on $\cbb^n$, because neither $\Phi$ nor $u$ is identically equal to 
$-\infty$.

Definition~\ref{def-proper-set} and the semiconvexity in the 
hypotheses of statement (\ref{lem-sup-conv-5}) easily yield that 
$\Phi\not\equiv-\infty$ and the sum $\Phi{+}\delta\|{\cdot}\|^2$ 
is convex on $\cbb^n$ for some $\delta>0$. Hence, we also have that 
$u_X^\Phi(y){+}\delta\|y\|^2$ is convex with respect to the variable 
$y\in\cbb^n$, because it is the supremum of the following convex 
functions,
\begin{eqnarray*}
y&\mapsto&u(x)-\delta\|x\|^2+2\delta\langle 
{x,y}\rangle+\Phi(y{-x})+\delta\|y{-x}\|^2\\ 
&&=\;u(x)+\Phi(y{-}x)+\delta\|y\|^2, 
\end{eqnarray*} 
which are all indexed by those points $x\in{X}$ with $u(x)\in\rbb$. 
Finally, point~(\ref{lem-sup-conv-7}) easily follows from the definition 
of the distance function and the hypothesis that $f$ is monotonically 
decreasing; i.e.,
\begin{equation}\label{eqn2-lem-sup-conv} 
\chi_{\cbb^n}^\Phi(y)=\sup_{x\in\cbb^n}\big\{\chi(x)+f(\|y{-x}\|) 
\big\}=\sup_{x\in{X}}f(\|y{-x}\|)=f(\mathrm{dist}_X(y)). 
\end{equation} 
\end{proof}

As we have said, one of the properties of the sup-convolution is that it 
takes upper semi-continuous functions (which may not be integrable) and 
produces continuous functions. This property is obtained when $\Phi$ is 
semiconvex; see Definition~\ref{def-proper-set}.

\begin{lem}\label{lem-convex-funct}
Let $X\subset\cbb^n$ be non-empty, and $u:X\to[-\infty,\infty)$ be upper 
semi-continuous. Given a semiconvex function $\Phi\in\ccal(\cbb^n)$ with 
constant $\delta>0$, so that $\Phi\not\equiv-\infty$, assume the proper 
set of definition for the sup-convolution $u_X^\Phi$ has non-empty 
interior $W\neq\emptyset$. The restriction of 
$f=u_X^\Phi{+\delta}\|{\cdot}\|^2$ to $W$ is then locally convex and 
continuous. Besides, if $u\not\equiv-\infty$, the function $f$ is twice 
differentiable almost everywhere on $W$ in the sense of Alexandrov; 
i.e., for almost every $x\in{W}$ there are a vector $\nu\in\cbb^n$ 
and a symmetric matrix $\acal\in\rbb^{2n\times2n}$ such that
$$\lim_{h\to0}\frac{f(x{+}h)-f(x)-\langle\nu, 
h\rangle-(h^t\!\acal{h})/2}{\|h\|^2}=0.$$ 
\end{lem}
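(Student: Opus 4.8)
The plan is to derive both assertions from the semiconvexity already recorded in Lemma~\ref{lem-sup-conv}, combined with two standard facts of convex analysis: that a convex function which is finite on an open set is continuous there, and Alexandrov's theorem on the almost-everywhere twice differentiability of convex functions.

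I would first dispose of the degenerate case $u\equiv-\infty$ on $X$: then $u_X^\Phi\equiv-\infty$, so $f\equiv-\infty$ and the first assertion holds trivially. Hence from now on I assume $u\not\equiv-\infty$, which is precisely the extra hypothesis required for the second assertion as well.

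To establish local convexity and continuity, I would invoke point~(\ref{lem-sup-conv-5}) of Lemma~\ref{lem-sup-conv}: since $\Phi$ is semiconvex with constant $\delta>0$ and $u\not\equiv-\infty$, the sup-convolution $u_X^\Phi$ is semiconvex with the same constant, so that $f=u_X^\Phi+\delta\|{\cdot}\|^2$ is convex, as an extended-real-valued function, on all of $\cbb^n$. Point~(\ref{lem-sup-conv-4}) of the same lemma then guarantees that $u_X^\Phi$, and hence $f$, is real-valued on the proper set of definition $Y$, and in particular on its interior $W\subseteq Y$. A convex function that is finite on an open set is locally Lipschitz, hence continuous, on that set, and its restriction to any ball contained in $W$ is convex; this yields the asserted local convexity and continuity of $f|_W$.

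For the Alexandrov differentiability I would apply the classical theorem of Alexandrov, stating that a finite convex function on an open convex subset of $\rbb^{2n}\cong\cbb^n$ is twice differentiable in the Alexandrov sense at almost every point. Covering the open set $W$ by countably many open balls $B\subset W$ and applying the theorem on each $B$, where $f$ is a finite convex function, the countably many exceptional null sets union to a null set, so $f$ is twice Alexandrov-differentiable almost everywhere on $W$. The only genuine analytic input here is Alexandrov's theorem itself, which I treat as a black box; the rest is bookkeeping. The main thing to be careful about is that $W$ need not be convex, so one must use the global convexity of $f$ on $\cbb^n$ to obtain convexity on each ball before Alexandrov's theorem can be applied locally.
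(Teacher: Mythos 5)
Your proposal is correct and follows essentially the same route as the paper's own proof: the degenerate case $u\equiv-\infty$ first, then point~(\ref{lem-sup-conv-5}) of Lemma~\ref{lem-sup-conv} for global convexity of $f$, point~(\ref{lem-sup-conv-4}) for finiteness of $f$ on $W$, and Alexandrov's theorem as a black box. If anything, you supply slightly more detail than the paper (the locally Lipschitz argument and the countable covering of the possibly non-convex $W$ by balls), which is a harmless refinement of the same argument.
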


In particular, if $f$ is $\mathcal{C}^2$-differentiable on a 
small neighbourhood of $x$ in $W$, one automatically has that 
$\nu=\nabla{f}(x)$ is the gradient and $\acal=H^{\rbb}f(x)$ is the real 
Hessian; moreover $\acal\geq0$, because $f$ is locally convex at $x$.

\begin{proof} The result is trivial when $u\equiv-\infty$, because 
$u_X^\Phi\equiv-\infty$ would be constant. Thus, we suppose from now 
on that it is not the case. Definition~\ref{def-sup-conv} and the 
point~(\ref{lem-sup-conv-5}) in Lemma~\ref{lem-sup-conv} imply that 
$f=u_X^\Phi{+\delta}\|{\cdot}\|^2$ is convex on $\cbb^n$, but it may 
take the extremal values $\pm\infty$. The point~(\ref{lem-sup-conv-4}) 
in the same Lemma~\ref{lem-sup-conv} then yields that $f(W)\subset\rbb$ 
does not contain the extremal values $\pm\infty$, so that the 
restriction of $f$ to $W$ is locally convex and continuous. The 
latter results are a consequence of Alexandrov's theorem for 
convex functions; see \cite{Alexandrov,CIL-viscosity}. 
\end{proof}

Even when the sup-convolution $u^\Phi$ can be defined for all \usc\ 
functions $u$ and $\Phi$, the more interesting cases happen when the 
proper set of definition for $u^\Phi$ is not empty and $u^\Phi$ is 
itself a continuous function. These properties are partially satisfied 
when $\Phi(x)$ is of the form $-\theta\|x\|^2$ for some positive real 
$\theta>0$ and according to Definition~\ref{def-proper-set}, 
Lemma~\ref{lem-convex-funct}, and the result below.

\begin{lem}\label{Phi=parabola} 
Let $u:U\to[-\infty,M]$ be an \usc\ function bounded from 
above by $M\in\rbb$ on a non-empty open set $U\subset\cbb^n$. 
Define $F_\theta=u_U^\Phi$ for the quadratic function 
$\Phi=-\theta\|{\cdot}\|^2$ and the real parameter $\theta>0$. Given 
any point $y\in{U}$, it lies in the proper set of definition for the 
sup-convolution $F_\theta$, whenever the image $u(y)\in\rbb$ is finite 
and the parameter $\theta\gg1$ is large enough. Moreover, the following 
identity holds for every $y\in{U}$,
\begin{equation}\label{eqn1-Phi=parabola}
u(y)=\lim_{\theta\to\infty}F_\theta(y).
\end{equation} 
\end{lem}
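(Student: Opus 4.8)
The plan is to treat $F_\theta(y)$ as a monotone family and pin down its limit by separating the supremum into a contribution from near $y$ and one from far away. First I would record three elementary facts. Taking $x=y$ in (\ref{eqn-sup-conv}) and using $\Phi(0)=0$ gives $F_\theta(y)\geq u(y)$, which is point~(\ref{lem-sup-conv-1}) of Lemma~\ref{lem-sup-conv}; the upper bound $F_\theta\leq M$ is point~(\ref{lem-sup-conv-2}) of the same lemma; and for each fixed $x$ the quantity $u(x)-\theta\|y-x\|^2$ is non-increasing in $\theta$, so $\theta\mapsto F_\theta(y)$ is non-increasing and its limit as $\theta\to\infty$ exists and equals $\inf_{\theta>0}F_\theta(y)\geq u(y)$.

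For the claim about the proper set of definition, assume $u(y)\in\rbb$. Since $U$ is open I can fix $\rho>0$ with $\overline{B_\rho(y)}\subset U$. For any $x\in U$ with $\|y-x\|\geq\rho$ one has $u(x)-\theta\|y-x\|^2\leq M-\theta\rho^2$, which drops below $u(y)$ once $\theta>(M-u(y))/\rho^2$. For such $\theta$ the candidates outside $\overline{B_\rho(y)}$ contribute strictly less than $u(y)\leq F_\theta(y)$, so the supremum coincides with the supremum over the compact ball $\overline{B_\rho(y)}$. The integrand $x\mapsto u(x)-\theta\|y-x\|^2$ is \usc\ on $U$ (a sum of the \usc\ function $u$ and a continuous term), hence attains its maximum on the compact set $\overline{B_\rho(y)}$ at some $\hat{x}$; this $\hat{x}$ realises $F_\theta(y)$, so $y$ lies in the proper set of definition.

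To establish (\ref{eqn1-Phi=parabola}) it remains, in view of the first paragraph, to show $\inf_{\theta>0}F_\theta(y)\leq u(y)$ for every $y\in U$, and I would split into two cases driven by upper semi-continuity. If $u(y)\in\rbb$, then given $\varepsilon>0$ upper semi-continuity furnishes $\rho>0$ with $\overline{B_\rho(y)}\subset U$ and $u(x)<u(y)+\varepsilon$ on $B_\rho(y)$; the near contribution is then bounded by $u(y)+\varepsilon$ (since $-\theta\|y-x\|^2\leq0$) while the far contribution is at most $M-\theta\rho^2$, which is $\leq u(y)+\varepsilon$ once $\theta$ is large, giving $F_\theta(y)\leq u(y)+\varepsilon$ and hence the claim as $\varepsilon\to0$. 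If instead $u(y)=-\infty$, I replace $u(y)+\varepsilon$ by an arbitrary threshold $-N$: upper semi-continuity gives $\rho>0$ with $u<-N$ on $B_\rho(y)$, the far contribution is again $\leq M-\theta\rho^2<-N$ for $\theta$ large, so $F_\theta(y)\leq-N$ and the limit is $-\infty=u(y)$.

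The argument is almost entirely soft; the only point requiring care is the repeated use of the far-field bound $M-\theta\rho^2$, which is exactly where the global upper bound $u\leq M$ and the quadratic growth of $\Phi$ cooperate. I expect the main (minor) obstacle to be the bookkeeping that the maximising point in the second paragraph indeed lies in $U$ rather than on an excluded part of a closure; choosing $\rho$ so that the whole closed ball $\overline{B_\rho(y)}$ sits inside the open set $U$ removes this difficulty.
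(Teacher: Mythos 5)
Your proof is correct and follows essentially the same route as the paper's: both reduce the supremum to a compact ball $\overline{B_\rho}(y)\subset U$ using the far-field bound $M-\theta\rho^2$ coming from $u\leq M$ and the quadratic penalty, then invoke upper semi-continuity and compactness for attainment and for the sandwich $u(y)\leq\lim_\theta F_\theta(y)\leq u(y)$. The only (cosmetic) differences are that you fix $\rho$ first and then take $\theta$ large where the paper chooses $\rho$ depending on $F_\theta(y)$ and $\theta$, and you handle the case $u(y)=-\infty$ directly with a threshold $-N$ rather than by the paper's contradiction argument via monotonicity of $\theta\mapsto F_\theta(y)$.
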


\begin{proof} The result is trivial when $u\equiv-\infty$, because 
$F_\theta=u_U^\Phi$ would be constant and identically equal to 
$-\infty$. Thus, we suppose from now on that it is not the case. A 
direct consequence is deduced from statements (\ref{lem-sup-conv-2}) 
and (\ref{lem-sup-conv-3}) in Lemma~\ref{lem-sup-conv}: the image 
of $F_\theta=u_U^\Phi$ lies inside the real interval $(-\infty,M]$. 
Moreover, given any fixed point $y\in{U}$, the supremum in the 
Definition~\ref{def-sup-conv} of the sup-convolution $F_\theta(y)$ 
only needs to be calculated in the intersection of $U$ with a 
compact ball $\overline{B_\rho}(y)$ of centre at $y$ and radius 
$\rho>0$ large enough; i.e.,
\begin{equation}\label{eqn2-Phi=parabola} 
F_\theta(y)=\sup\big\{u(x)-\theta\|y{-x}\|^2: 
x\in{U\cap}\overline{B_\rho}(y)\big\} 
\end{equation} 
for any $\rho^2>\frac{M-F_\theta(y)}\theta\geq0$, because 
$u(x)-\theta\|y{-x}\|^2<F_\theta(y)$ whenever $x\in{U}$ lies in 
the complement of $\overline{B_\rho}(y)$. Assume for some moments 
that $u(y)\in\rbb$ is finite for $y\in{U}$ fixed, the statement 
(\ref{lem-sup-conv-1}) in Lemma~\ref{lem-sup-conv} implies that the 
sup-convolution $F_\theta(y)$ is bounded from below by $u(y)$ for every 
$\theta>0$. Whence, if we take the parameter $\theta\gg1$ large enough 
in (\ref{eqn2-Phi=parabola}), we can chose a radius $\rho>0$ small 
enough such that the compact ball $\overline{B_\rho}(y)$ is contained 
in $U$. Since $u$ is \usc, the maximum in (\ref{eqn-sup-conv}) 
and (\ref{eqn2-Phi=parabola}) is attained at some point in 
$\overline{B_\rho}(y)$, and so $y$ lies in the proper set of definition 
for $F_\theta=u_U^\Phi$ according to Definition~\ref{def-proper-set}.

Finally, we show (\ref{eqn1-Phi=parabola}) by considering two cases. 
If $u(y)\in\rbb$ is finite for $y\in{U}$ fixed, we have already seen 
that $F_\theta(y)$ is bounded from below by $u(y)$ for every $\theta>0$ 
according to point~(\ref{lem-sup-conv-1}) in Lemma~\ref{lem-sup-conv}. 
Hence, $\frac{M-F_\theta(y)}\theta$ converges to zero when $\theta$ 
goes to $\infty$, and so we can apply the limit when $\rho\to0$ into 
(\ref{eqn2-Phi=parabola}) in order to deduce (\ref{eqn1-Phi=parabola}). 
Since $u$ is \usc, we easily have that 
\begin{equation}\label{eqn3-Phi=parabola} 
u(y)\leq\lim_{\theta\to\infty}F_\theta(y)\leq\lim_{\rho\to0}
\sup\big\{u(x):x\in{U\cap}\overline{B_\rho}(y)\big\}\leq{u}(y).
\end{equation} 

We proceed by contradiction when $u(y)=-\infty$ for the point 
$y\in{U}$. Assume that $F_\theta(y)$ in (\ref{eqn1-Phi=parabola}) 
does not converge to $-\infty$ when $\theta\to\infty$. It is easy to 
deduce from (\ref{eqn-sup-conv}) or (\ref{eqn2-Phi=parabola}) that the 
sup-convolution $F_\theta(y)$ is monotonically decreasing with respect 
to $\theta$, and so $F_\theta(y)$ is bounded from below by some 
$\beta\in\rbb$ for every $\theta>0$. We may now proceed as in the 
previous paragraphs to deduce that equation (\ref{eqn3-Phi=parabola}) 
also holds. This result yields a contradiction, because $u(y)$ is equal 
to $-\infty$, and so both terms of equation (\ref{eqn1-Phi=parabola}) 
are equal to $-\infty$ as well. 
\end{proof}

\section{Sup-convolution and \qpsh\ functions} 

As we mentioned in the introduction and the previous section, one of 
the most important results in the theory of viscosity solutions is 
the so called \textit{magic property}: the sup-convolution $u^\Phi$ 
of a viscosity subsolution $u$ is again a viscosity subsolution (on 
the interior of the proper set of definition for $u^\Phi$). Thus, we 
conclude this work by showing below an extension to this magic property. 
As a pair of applications we also deduce that the sup-convolution of a 
\sqpsh\ function is \sqpsh\ and we find a new characterisation for the 
open \qpsc\ subsets $U\subset\cbb^n$. This latter result extends a list 
of characterisations and properties of \qpsc\ sets we collected in 
\cite{TPESZ} and \cite{TPESZ2}.

\begin{prop}\label{prop-sup-conv} 
Let $\Phi:\cbb^n\to[-\infty,\infty)$ be \usc. Given a 
non-empty open set $U\subset\cbb^n$ and a fixed Hermitian matrix 
$\acal\in\cbb^{n\times{n}}$, take any function $g\in\ccal^2(U)$ whose 
complex Hessian $H^{\cbb}g\geq\acal$ is uniformly bounded from below on 
$U$; i.e., the difference $H^{\cbb}g{-\acal}$ is positive semidefinite 
on $U$. Moreover, given any function $u\in\PSH_q(U)$ for a fixed integer 
$q\geq0$, suppose the proper set of definition for the sup-convolution 
$[u{+g}]_U^\Phi$ has non-empty interior $W\neq\emptyset$. If the 
restriction of $[u{+g}]_U^\Phi$ to $W$ is \usc, then 
$[u{+g}]_U^\Phi{-h}$ lies in $\PSH_q(W)$ for every $h\in\ccal^2(W)$ with 
complex Hessian $H^{\cbb}h\leq\acal$ uniformly bounded from above on $W$. 
\end{prop}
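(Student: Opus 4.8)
The plan is to leverage the viscosity characterisation just established (Theorem~\ref{thm-usc-visc-qpsh}) together with the \emph{magic property} of the sup-convolution, suitably adapted to absorb the quadratic corrections coming from $g$ and $h$. The key observation is that the operator $\ominus$ acting on complex Hessians is elliptic degenerate (Lemma~\ref{elliptic-degenerate}), so shifting a Hermitian matrix by a positive semidefinite amount can only decrease the count of strictly negative eigenvalues. Concretely, I would reduce the statement to a viscosity test on $v:=[u{+}g]_U^\Phi{-}h$ over the open set $W$: by Theorem~\ref{thm-usc-visc-qpsh} it suffices to show that for every test function $\varphi\in\ccal^2(W)$ with $v{-}\varphi$ attaining its maximum at some point $\hat{p}\in W$ where $v(\hat p)\in\rbb$, the complex Hessian $H^{\cbb}\varphi(\hat p)$ has at most $q$ strictly negative eigenvalues.

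First I would unwind the definition of the proper set and the sup-convolution. Since $\hat p\in W$ lies in the proper set of definition, there is a witness $\hat x\in U$ with $[u{+}g]_U^\Phi(\hat p)=u(\hat x){+}g(\hat x){+}\Phi(\hat p{-}\hat x)$, and by point~(\ref{lem-sup-conv-4}) of Lemma~\ref{lem-sup-conv} this value is a finite real number. The local maximum condition on $v{-}\varphi$ then translates into a statement comparing $u{+}g$ near $\hat x$ with a translated test function: for all $y$ near $\hat p$ and all $x$ near $\hat x$ one has
\begin{equation*}
u(x)+g(x)+\Phi(y{-}x)-h(y)-\varphi(y)\;\le\;u(\hat x)+g(\hat x)+\Phi(\hat p{-}\hat x)-h(\hat p)-\varphi(\hat p).
\end{equation*}
Reading this as a comparison at the point $\hat x$ in the $x$-variable (with $y=\hat p$ fixed on the left bounding the right), I obtain that the function $x\mapsto u(x)$ satisfies a local maximum condition against the $\ccal^2$ test function $x\mapsto \psi(x):=h(\hat p)+\varphi(\hat p)-g(x)-\Phi(\hat p{-}x)+\varphi(\hat p)\ldots$; more cleanly, the difference $u-\big[(h{+}\varphi)(\cdot)-g(\cdot)-\Phi(\hat p{-}\cdot)\big]$ attains a local maximum at $\hat x$. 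The subtlety is that $\Phi$ need not be smooth, so I cannot directly use $\Phi(\hat p{-}\cdot)$ as a $\ccal^2$ test function; the standard device is instead to build the test function in the joint variable and use semiconvexity of $\Phi$ (point~(\ref{lem-sup-conv-5})) together with the Alexandrov/Jensen machinery recalled in Lemma~\ref{lem-convex-funct}, or to exploit twice-differentiability almost everywhere, which is precisely the \emph{hard part} of the magic property.

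Granting the standard magic-property mechanism, the $\ccal^2$ test function $\phi$ against which $u$ is tested at $\hat x$ satisfies, at $\hat x$, the Hessian identity
\begin{equation*}
H^{\cbb}\phi(\hat x)\;=\;H^{\cbb}\varphi(\hat p)+H^{\cbb}h(\hat p)-H^{\cbb}g(\hat x),
\end{equation*}
because translating the argument of $\Phi$ does not alter its contribution to the complex Hessian once one passes to the matched second-order data at the touching point. Since $u\in\PSH_q(U)$ is $q$-plurisubharmonic in the viscosity sense (Proposition~\ref{prop-usc-visc-qpsh}), the operator $\ominus(H^{\cbb}\phi(\hat x))\le q$. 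Now I invoke the two Hessian bounds in the hypotheses: $H^{\cbb}g(\hat x)\ge\acal$ and $H^{\cbb}h(\hat p)\le\acal$, so $H^{\cbb}h(\hat p)-H^{\cbb}g(\hat x)\le\acal-\acal=0$ in the Löwner order, giving $H^{\cbb}\varphi(\hat p)\ge H^{\cbb}\phi(\hat x)$. The ellipticity of $\ominus$ from Lemma~\ref{elliptic-degenerate} then yields $\ominus(H^{\cbb}\varphi(\hat p))\le\ominus(H^{\cbb}\phi(\hat x))\le q$, which is exactly the viscosity test for $v$ at $\hat p$. By Theorem~\ref{thm-usc-visc-qpsh}, $v=[u{+}g]_U^\Phi{-}h\in\PSH_q(W)$, completing the proof. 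I expect the genuine obstacle to be the non-smoothness of $\Phi$: turning the heuristic comparison at $(\hat p,\hat x)$ into a rigorous $\ccal^2$ test function for $u$ requires the usual doubling-of-variables or Alexandrov-point argument, which is where Lemma~\ref{lem-convex-funct} and the semiconvexity earn their keep.
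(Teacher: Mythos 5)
Your overall skeleton---reduce to the viscosity test via Theorem~\ref{thm-usc-visc-qpsh}, extract the witness $\hat{x}\in U$ from the proper set of definition, and close with $H^{\cbb}h(\hat{p})-H^{\cbb}g(\hat{x})\leq\acal-\acal=0$ in the L\"owner order plus the ellipticity of $\ominus$ from Lemma~\ref{elliptic-degenerate}---coincides with the paper's proof, and your final Hessian identity is exactly the paper's equation~(\ref{eqn3-prop-sup-conv}). But there is a genuine gap at the one step you leave as a black box. Having frozen $y=\hat{p}$ and varied only $x$, you are left with the non-smooth term $\Phi(\hat{p}-\cdot)$ in your would-be test function, and you propose to escape via ``the standard magic-property mechanism'': doubling of variables, semiconvexity of $\Phi$ (point~(\ref{lem-sup-conv-5}) of Lemma~\ref{lem-sup-conv}), and the Alexandrov machinery of Lemma~\ref{lem-convex-funct}. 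That escape route is unavailable under the stated hypotheses: the proposition assumes only that $\Phi$ is \usc\ (it may even take the value $-\infty$ somewhere), so there is no semiconvexity to exploit and Lemma~\ref{lem-convex-funct} does not apply. Your fallback would at best prove the semiconvex case, which in the paper is the downstream Corollary~\ref{cor-sup-conv}, not an ingredient of the proposition.

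The missing idea is elementary and removes every regularity demand on $\Phi$: couple the two variables by a translation instead of freezing one. In the paper's inequality~(\ref{eqn1-prop-sup-conv}), set $w=f(y):=y-\hat{x}+\hat{p}$, so that $w-y\equiv\hat{p}-\hat{x}$ and $\Phi(w-y)=\Phi(\hat{p}-\hat{x})$ is a \emph{constant}; combined with the witness identity $(u{+}g)(\hat{x})+\Phi(\hat{p}-\hat{x})=[u{+}g]_U^\Phi(\hat{p})\in\rbb$, the maximum condition on $F-\varphi$ at $\hat{p}$ yields, for all $y$ in a neighbourhood $\Omega$ of $\hat{x}$ with $f(\Omega)\subset W$, the inequality $(u{+}g)(\hat{x})-(h{+}\varphi){\circ}f(\hat{x})\geq(u{+}g)(y)-(h{+}\varphi){\circ}f(y)$. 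Hence $u-\Upsilon$ with $\Upsilon:=-g+(h{+}\varphi){\circ}f\in\ccal^2(\Omega)$ attains a local maximum at $\hat{x}$, and since $f$ is a translation, $H^{\cbb}\Upsilon(\hat{x})=H^{\cbb}(h{+}\varphi)(\hat{p})-H^{\cbb}g(\hat{x})$---precisely the identity you asserted, now with a two-line proof and no Alexandrov points; indeed this coupling \emph{is} the standard magic-property mechanism, not a consequence of it. One further small omission: the maximum of $u-\Upsilon$ is only local, on $\Omega$, whereas testing $u\in\PSH_q(U)$ in the viscosity sense via Definition~\ref{def-visc-qpsh} requires a maximum on all of $U$; the paper repairs this with Lemma~\ref{lem-extension}, producing $\psi\in\ccal^2(U)$ such that $u-\psi$ has a strict global maximum at $\hat{x}$ with $H^{\cbb}\psi(\hat{x})=H^{\cbb}\Upsilon(\hat{x})$, before applying Proposition~\ref{prop-usc-visc-qpsh} and Lemma~\ref{elliptic-degenerate} exactly as you do in your endgame.
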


Recall the L\"owner partial order ``$\geq$" between Hermitian matrices 
introduced in Lemma~\ref{elliptic-degenerate}.

\begin{proof} The result is trivial when $q\geq{n}$ or 
$[u{+g}]_U^\Phi\equiv-\infty$ on $W$. Thus, we assume from now on that 
$q<n$ and neither $\Phi$ nor $u$ is identically equal to $-\infty$ on 
their respective set of definition. Recall that 
$[u{+g}]_U^\Phi\equiv-\infty$ whenever $\Phi$ or $u$ is identically 
equal to $-\infty$ because of Definition \ref{def-sup-conv} or the 
point~(\ref{lem-sup-conv-3}) in Lemma~\ref{lem-sup-conv}. On the other 
hand, since $W$ is the interior of the proper set of definition for 
the sup-convolution $[u{+g}]_U^\Phi$, its image 
$[u{+g}]_U^\Phi(W)\subset\rbb$ does not contain the extremal values 
$\pm\infty$ according to the point~(\ref{lem-sup-conv-4}) in the same 
Lemma~\ref{lem-sup-conv}.

Define now $F:=[u{+g}]_U^\Phi{-h}$ and consider any 
$\varphi\in\ccal^2(W)$ such that $F-\varphi$ takes its maximum at some 
$\hat{p}\in{W}$. We shall prove that $\ominus(H^{\cbb}\varphi)\leq{q}$ 
at the point $\hat{p}$, so that Definition~\ref{def-visc-qpsh} and 
Theorem~\ref{thm-usc-visc-qpsh} will imply that $F$ is \qpsh\ in the 
classical and viscosity senses on $W$. Since $F-\varphi$ takes its 
maximum at $\hat{p}\in{W}$, formula (\ref{eqn-sup-conv}) yields the 
identities below for all $w\in{W}$ and $y\in{U}$,
\begin{equation}\label{eqn1-prop-sup-conv} 
\begin{array}{r}[u{+}g]_U^\Phi(\hat{p})-(h{+}\varphi) 
(\hat{p})\;\geq\;[u{+}g]_U^\Phi(w)-(h{+}\varphi)(w)\\[2pt]
\geq\;(u{+}g)(y)+\Phi(w{-y})-(h{+}\varphi)(w). 
\end{array}\end{equation}

Since $\hat{p}$ lies in the interior $W$ of the proper set of 
definition for $[u{+g}]_U^\Phi$, Defi\-ni\-tion~\ref{def-proper-set} 
implies the existence of a point $\hat{x}\in U$, such that
\begin{equation}\label{eqn2-prop-sup-conv}
(u{+}g)(\hat{x})+\Phi(\hat{p}{-}\hat{x})
=[u{+}g]_U^\Phi(\hat{p})\in\rbb.
\end{equation}

Now fix $w=f(y):=y-\hat{x}+\hat{p}$, so that $w-y$ is equal to 
$\hat{p}-\hat{x}$. There is then a small enough neighbourhood 
$\Omega$ of $\hat{x}$ in $U$, such that $f(\Omega)\subset{W}$ 
is also a neighbourhood of $\hat{p}=f(\hat{x})$ in $W$. Equations 
(\ref{eqn1-prop-sup-conv}) and (\ref{eqn2-prop-sup-conv}) yield 
the following inequality for every point $y\in\Omega$,
$$(u{+}g)(\hat{x})-(h{+}\varphi){\circ}f(\hat{x}) 
\geq(u{+}g)(y)-(h{+}\varphi){\circ}f(y).$$

Whence, if we define $\Upsilon:=-g+(h{+\varphi)\circ}f$, the restriction 
of $u-\Upsilon$ to $\Omega$ attains its maximum at $\hat{x}\in{\Omega}$. 
Since $u(\hat{x})\in\rbb$ is finite according to 
(\ref{eqn2-prop-sup-conv}), Lemma~\ref{lem-extension} implies the 
existence of a test function $\psi\in\ccal^2(U)$ such that $u-\psi$ 
has a strict global maximum at $\hat{x}\in{U}$ and the complex Hessians 
$H^{\cbb}\Upsilon(\hat{x})$ and $H^{\cbb}\psi(\hat{x})$ coincide. The 
given hypotheses also imply that 
$H^{\cbb}h(\hat{p})\leq{H^\cbb}g(\hat{x})$, because $H^{\cbb}h\leq\acal$ 
on $W$ and $\acal\leq{H^\cbb}g$ on $U$; see for example section~6.2 of 
\cite{Zhang}. Since $f$ is a translation, we can use all the previous 
results to deduce that,
\begin{equation}\label{eqn3-prop-sup-conv} 
H^{\cbb}\psi(\hat{x})=H^{\cbb}\Upsilon(\hat{x})=H^{\cbb}({h+} 
\varphi)(\hat{p})-H^{\cbb}g(\hat{x})\leq{H^\cbb}\varphi(\hat{p}). 
\end{equation}

Now then, the operator $\ominus(H^{\cbb}\psi)\leq{q}$ at the 
point $\hat{x}$, because $u-\psi$ has a strict global maximum at 
$\hat{x}\in{U}$ and $u\in\PSH_q(U)$. Recall the given hypotheses, 
Definition~\ref{def-visc-qpsh}, and Theorem~\ref{thm-usc-visc-qpsh}. 
Moreover, $\ominus(H^{\cbb}\varphi)\leq{q}$ at $\hat{p}\in{W}$ because 
of (\ref{eqn3-prop-sup-conv}) and Lemma~\ref{elliptic-degenerate}, so 
that the restriction of $[u{+g}]_U^\Phi{-h}$ to $W$ is \qpsh\ in the 
classical and viscosity senses on $W$.
\end{proof}

We can obviously define $g(y)=h(y)=y^t\!\acal\overline{y}$ 
in the previous proposition for any fixed Hermitian matrix 
$A\in\cbb^{n\times{n}}$, so that the complex Hessians $H^{\cbb}g$ 
and $H^{\cbb}h$ given in (\ref{complex-hessian}) are constant and 
equal to $\acal$. The following results are a direct consequence of 
Lemma~\ref{lem-convex-funct} and Proposition~\ref{prop-sup-conv}.

\begin{cor}\label{cor-sup-conv}
Let $\Phi\in\ccal(\cbb^n)$ be semiconvex with constant $\delta>0$, 
and $u$ be \qpsh\ on a non-empty open set $U\subset\cbb^n$ and 
for an integer $q\geq0$. Given a constant Hermitian matrix 
$\acal\in\cbb^{n\times{n}}$, define $g(y)=y^t\!\acal\overline{y}$ 
and suppose that the proper set of definition for the sup-convolution 
$[u{+g}]_U^\Phi$ has non-empty interior $W\neq\emptyset$. The 
restriction of $[u{+g}]_U^\Phi{-g}$ to $W$ is then continuous 
and \qpsh\ on $W$. 
\end{cor}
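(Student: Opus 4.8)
The plan is to deduce Corollary~\ref{cor-sup-conv} as a direct application of Proposition~\ref{prop-sup-conv} together with Lemma~\ref{lem-convex-funct}, after checking that the hypotheses of both results are met when we take $g(y)=h(y)=y^t\!\acal\overline{y}$. First I would observe that for this quadratic $g$, the complex Hessian $H^{\cbb}g$ in (\ref{complex-hessian}) is constant and equal to $\acal$ at every point of $\cbb^n$; this is the computation already indicated in the paragraph preceding the corollary. Consequently the uniform bounds required by Proposition~\ref{prop-sup-conv} hold trivially: $H^{\cbb}g=\acal\geq\acal$ is bounded from below by $\acal$ on $U$, and $H^{\cbb}h=\acal\leq\acal$ is bounded from above by $\acal$ on $W$, so the difference $H^{\cbb}g-\acal$ is positive semidefinite (indeed zero) on $U$ and likewise $\acal-H^{\cbb}h$ is positive semidefinite on $W$.

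Next I would address the continuity claim, which is where the extra hypothesis that $\Phi$ is semiconvex enters. The idea is to apply Lemma~\ref{lem-convex-funct} to the \usc\ function $u+g$ in place of $u$. Since $g$ is a continuous quadratic polynomial and $u\in\PSH_q(U)$ is \usc, the sum $u+g$ is \usc\ on $U$; moreover $u+g\not\equiv-\infty$ because $u\not\equiv-\infty$ (the case $u\equiv-\infty$ makes everything trivially $-\infty$ and constant). With $\Phi\in\ccal(\cbb^n)$ semiconvex of constant $\delta>0$ and $\Phi\not\equiv-\infty$, Lemma~\ref{lem-convex-funct} guarantees that the restriction of $[u{+}g]_U^\Phi{+}\delta\|{\cdot}\|^2$ to $W$ is locally convex and continuous; in particular $[u{+}g]_U^\Phi$ itself is continuous on $W$. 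Subtracting the continuous function $g$ then shows that $[u{+}g]_U^\Phi{-}g$ is continuous on $W$, as asserted.

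Having secured continuity, the \qpsh\ conclusion follows immediately. The restriction of $[u{+}g]_U^\Phi$ to $W$ is \usc\ (being continuous), so the final hypothesis of Proposition~\ref{prop-sup-conv} is satisfied, and applying that proposition with this specific choice $h=g$ yields that $[u{+}g]_U^\Phi{-}g$ lies in $\PSH_q(W)$. I would therefore structure the write-up as: verify the Hessian bounds for the quadratic $g=h$; invoke Lemma~\ref{lem-convex-funct} on $u+g$ to get continuity of $[u{+}g]_U^\Phi$ and hence of $[u{+}g]_U^\Phi{-}g$ on $W$; and finally invoke Proposition~\ref{prop-sup-conv} to conclude \qpshy.

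The only genuine subtlety, and the step I would flag as requiring care, is the bookkeeping that lets Lemma~\ref{lem-convex-funct} apply to $u+g$ rather than to $u$ alone: one must confirm that the proper set of definition (and hence its interior $W$) is the same for the sup-convolution $[u{+}g]_U^\Phi$ regardless of how we group the terms, which is immediate from Definition~\ref{def-proper-set} since $[u{+}g]_U^\Phi$ is literally the sup-convolution of the single \usc\ function $u+g$ with kernel $\Phi$. Everything else is a routine substitution $h=g$ into the already-proven proposition, so no new estimates or constructions are needed.
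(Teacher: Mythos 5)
Your proposal is correct and takes essentially the same route as the paper, which derives the corollary exactly as a direct consequence of Proposition~\ref{prop-sup-conv} with the choice $h=g=y^t\!\acal\overline{y}$ (so that $H^{\cbb}g=H^{\cbb}h=\acal$, making both Hessian bounds trivial) together with Lemma~\ref{lem-convex-funct} applied to the \usc\ function $u+g$, which yields the continuity of $[u{+}g]_U^\Phi$ on $W$ and thereby the upper semi-continuity hypothesis of the proposition. Your flagged subtlety about the proper set of definition is resolved just as you say, since $[u{+}g]_U^\Phi$ is by Definition~\ref{def-proper-set} the sup-convolution of the single function $u+g$, so there is no gap.
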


Observe that $\Phi\not\equiv-\infty$ because of the given hypotheses. 
Moreover, one can fix $g\equiv0$ in Corollary~\ref{cor-sup-conv}, so 
that the sup-convolution $u^\Phi$ of a \qpsh\ function $u$ is again 
\qpsh. We can show a similar result for \sqpsh\ functions. Recall the 
following definition from \cite{Bu,Va,Dieu}.

\begin{defn}\label{defn-strict-qpsh}
Let $u:U\to[-\infty,\infty)$ be \usc\ on a non-empty open set 
$U\subset\cbb^n$. One says that $u$ is \sqpsh\ on $U$ if and only if for 
every point $y\in{U}$ there are a real $\varepsilon>0$ and a non-empty 
open ball $B\subset{U}$ with centre at $y$, such that the restriction 
of $u{-}\varepsilon\|{\cdot}\|^2$ to $B$ is \qpsh\ on $B$. 
\end{defn}

Notice that $u\equiv-\infty$ is \sqpsh; and it is the only constant 
function with this property. One must also say that the \textit{\sqpsh} 
functions are sometimes called \textit{strongly \qpsh}.

\begin{cor}
Let $u:U\to[-\infty,\infty)$ be strictly \qpsh\ on a non-empty open 
set $U\subset\cbb^n$ and for an integer $q\geq0$. Given any compactly 
contained open set $\Omega\Subset{U}$, assume the proper set of 
definition for the sup-convolution $[u|_\Omega]_{\Omega}^\Phi$ has 
a non-empty interior $W\neq\emptyset$, where $\Phi\in\ccal(\cbb^n)$ 
is semi\-con\-vex with constant $\delta>0$. The restriction of 
$[u|_\Omega]_{\Omega}^\Phi$ to $W$ is then continuous and 
strictly \qpsh\ on $W$. 
\end{cor}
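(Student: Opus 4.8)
The plan is to reduce everything to Corollary~\ref{cor-sup-conv} once a single strict-plurisubharmonicity constant has been made to work uniformly on all of $\Omega$. First I would extract such a uniform constant. Since $\Omega\Subset{U}$, the closure $\overline{\Omega}$ is a compact subset of $U$, and by Definition~\ref{defn-strict-qpsh} every point $y\in\overline{\Omega}$ carries a real $\varepsilon_y>0$ together with an open ball $B_y\subset{U}$ centred at $y$ on which $u-\varepsilon_y\|{\cdot}\|^2$ is \qpsh. Covering $\overline{\Omega}$ by finitely many such balls $B_{y_1},\dots,B_{y_m}$ and putting $\varepsilon_0:=\min_i\varepsilon_{y_i}>0$, I would note that on each $B_{y_i}$ one has
\[
u-\varepsilon_0\|{\cdot}\|^2=\big(u-\varepsilon_{y_i}\|{\cdot}\|^2\big)+(\varepsilon_{y_i}-\varepsilon_0)\|{\cdot}\|^2 ,
\]
which is the sum of a \qpsh\ function and a non-negative multiple of the $0$-\psh\ function $\|{\cdot}\|^2$, hence \qpsh\ by point~(\ref{qpsh-fcts}) in Proposition~\ref{propqpsh}. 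Locality of \qpshy, point~(\ref{qpsh-local}) of the same proposition, then shows that $v:=u-\varepsilon_0\|{\cdot}\|^2$ is \qpsh\ on the open neighbourhood $\bigcup_iB_{y_i}$ of $\overline{\Omega}$; in particular its restriction lies in $\PSH_q(\Omega)$.

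Next I would put the sup-convolution into the exact shape required by Corollary~\ref{cor-sup-conv}. Setting $g(y):=\varepsilon_0\|y\|^2=y^t(\varepsilon_0{I})\overline{y}$ for the constant Hermitian matrix $\acal=\varepsilon_0{I}$, I have $u|_\Omega=v+g$ with $v\in\PSH_q(\Omega)$, so the two sup-convolutions literally coincide, $[u|_\Omega]_\Omega^\Phi=[v+g]_\Omega^\Phi$. By hypothesis the proper set of definition of this sup-convolution has non-empty interior $W$, and $\Phi\in\ccal(\cbb^n)$ is semiconvex with constant $\delta>0$; thus Corollary~\ref{cor-sup-conv}, applied to the \qpsh\ function $v$ and the quadratic $g$, yields that the restriction of
\[
[v+g]_\Omega^\Phi-g=[u|_\Omega]_\Omega^\Phi-\varepsilon_0\|{\cdot}\|^2
\]
to $W$ is continuous and \qpsh\ on $W$.

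Finally I would read off the two conclusions. Continuity of $[u|_\Omega]_\Omega^\Phi$ on $W$ follows at once, since it differs from the continuous function $[u|_\Omega]_\Omega^\Phi-\varepsilon_0\|{\cdot}\|^2$ by the continuous term $\varepsilon_0\|{\cdot}\|^2$. For the strictness, the function $[u|_\Omega]_\Omega^\Phi-\varepsilon_0\|{\cdot}\|^2$ is \qpsh\ on the whole of $W$, so given any $y\in{W}$ I may take $\varepsilon=\varepsilon_0$ and any open ball $B\subset{W}$ centred at $y$; the restriction to $B$ is still \qpsh\ by locality, which is precisely the condition of Definition~\ref{defn-strict-qpsh} for $[u|_\Omega]_\Omega^\Phi$ to be strictly \qpsh\ on $W$.

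The only genuinely delicate step I anticipate is the first one, the uniformisation of the $\varepsilon_y$ into a single $\varepsilon_0$ valid throughout $\Omega$. This is exactly where the compact containment $\Omega\Subset{U}$ is indispensable: because the sup-convolution couples the values of $u$ across all of $\Omega$, one needs $u-\varepsilon_0\|{\cdot}\|^2$ to be \qpsh\ on the whole set $\Omega$ simultaneously, and not merely near each point separately. Everything after this reduction is a direct appeal to Corollary~\ref{cor-sup-conv} and to Definition~\ref{defn-strict-qpsh}.
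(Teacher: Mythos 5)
Your proof is correct and follows essentially the same route as the paper: compactness of $\overline{\Omega}$ yields a single uniform constant $\varepsilon_0>0$ with $u-\varepsilon_0\|{\cdot}\|^2\in\PSH_q(\Omega)$, after which Corollary~\ref{cor-sup-conv} applied with $g=\varepsilon_0\|{\cdot}\|^2$ gives continuity and strict \qpshy\ on $W$. Your intermediate justification that the minimum $\varepsilon_0$ works on each ball --- writing $u-\varepsilon_0\|{\cdot}\|^2=(u-\varepsilon_{y_i}\|{\cdot}\|^2)+(\varepsilon_{y_i}-\varepsilon_0)\|{\cdot}\|^2$ and invoking points~(\ref{qpsh-fcts}) and~(\ref{qpsh-local}) of Proposition~\ref{propqpsh} --- is merely a more explicit version of the one-line assertion in the paper's proof.
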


\begin{proof} We assert that the restriction of $u{-}\delta\|{\cdot}\|$ 
to $\Omega$ is \qpsh\ on $\Omega$ for some small real $\delta>0$. Since 
$\overline{\Omega}\subset{U}$ is a compact set, 
Definition~\ref{defn-strict-qpsh} yields the existence of $m$ 
positive real numbers $\{\varepsilon_j\}_{j=1}^m$ and a finite cover 
$\{B_j\}_{j=1}^m$ of $\overline{\Omega}$ by non-empty open balls, such 
that the restriction of ${u-}\varepsilon_j\|{\cdot}\|^2$ to $B_j$ is 
\qpsh\ on $B_j$ for every $j=1,...,m$. The real $\delta>0$ we are 
looking for is the minimum of $\{\varepsilon_j\}_{j=1}^m$.

Corollary~\ref{cor-sup-conv} now implies that the restriction 
of $[u|_\Omega]_{\Omega}^\Phi{-}\varepsilon\|{\cdot}\|$ to $W$ 
is also continuous and \qpsh\ on $W$, because we only need to fix 
$g=\varepsilon\|{\cdot}\|$. Hence, $[u|_\Omega]_{\Omega}^\Phi$ 
is strictly \qpsh\ on $W$.
\end{proof}

Another interesting result is that the complex Hessian $H^{\cbb}u$ of 
a strictly \qpsh\ function $u$ has at least $n{-q}$ strictly positive 
eigenvalues in the viscous sense. Recall the operator $\oplus$ defined 
in Lemma~\ref{elliptic-degenerate}.

\begin{lem}
Let $u:U\to[-\infty,\infty)$ be strictly \qpsh\ on an open 
set $U\subset\cbb^n$ and for an integer $q\geq0$. Assume that 
$u\not\equiv-\infty$. Given any test function $\varphi\in\ccal^2(U)$, 
the operator $\oplus(H^{\cbb}\varphi)\geq{n-}q$ at those points 
$\hat{p}\in{U}$ where $u-\varphi$ attains its maximum; i.e., the 
complex Hessian $H^{\cbb}\varphi$ in (\ref{complex-hessian}) has at 
least $n{-q}$ strictly positive eigenvalues at those points $\hat{p}$. 
\end{lem}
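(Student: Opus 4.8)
The plan is to reduce the assertion about the strictly positive eigenvalues of $H^{\cbb}\varphi$ to the already established viscosity characterisation of ordinary \qpsh\ functions, by absorbing a quadratic term into the test function. The single device driving the whole argument is the subtraction of $\varepsilon\|{\cdot}\|^2$, which converts strict \qpshy\ into plain \qpshy\ while shifting the spectrum of the relevant Hessian.

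First I would record that the maximum value is finite. Since $u\not\equiv-\infty$, there is a point where $u$ is real; as $\varphi\in\ccal^2(U)$ is everywhere finite, the maximum of $u-\varphi$ over $U$ is strictly larger than $-\infty$, so $(u-\varphi)(\hat{p})\in\rbb$ and hence $u(\hat{p})\in\rbb$. This guarantees that $\hat{p}$ is a legitimate maximum point in the sense of Definition~\ref{def-visc-qpsh}. (The case $q\geq n$ is in any event vacuous, since then $n-q\leq0\leq\oplus(H^{\cbb}\varphi)$ trivially, so I may assume $q<n$.)

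Next I would invoke the strict \qpshy\ at the point $\hat{p}$. By Definition~\ref{defn-strict-qpsh} there are a real $\varepsilon>0$ and an open ball $B\subset U$ centred at $\hat{p}$ such that $v:=u-\varepsilon\|{\cdot}\|^2$ is \qpsh\ on $B$. The crucial observation is the cancellation
$$v-\widetilde{\varphi}=\bigl(u-\varepsilon\|{\cdot}\|^2\bigr)-\bigl(\varphi-\varepsilon\|{\cdot}\|^2\bigr)=u-\varphi,\qquad\widetilde{\varphi}:=\varphi-\varepsilon\|{\cdot}\|^2\in\ccal^2(B),$$
so $v-\widetilde{\varphi}$ attains its maximum at $\hat{p}$ on $B$: it agrees with $u-\varphi$ on $B$, and the latter has its global maximum over $U\supset B$ at $\hat{p}$. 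Since $v$ is \qpsh\ on $B$, Proposition~\ref{prop-usc-visc-qpsh} applied on the open set $B$ shows that $v$ is \qpsh\ in the viscosity sense on $B$, whence $\ominus\bigl(H^{\cbb}\widetilde{\varphi}(\hat{p})\bigr)\leq q$ by Definition~\ref{def-visc-qpsh}.

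Finally I would translate this eigenvalue count back to $\varphi$. Because $H^{\cbb}(\varepsilon\|{\cdot}\|^2)=\varepsilon I$ is $\varepsilon$ times the identity, one has $H^{\cbb}\widetilde{\varphi}(\hat{p})=H^{\cbb}\varphi(\hat{p})-\varepsilon I$, so its eigenvalues are $\lambda_k-\varepsilon$, where $\lambda_1,\dots,\lambda_n$ denote the eigenvalues of $H^{\cbb}\varphi(\hat{p})$. The inequality $\ominus\bigl(H^{\cbb}\varphi(\hat{p})-\varepsilon I\bigr)\leq q$ says that at most $q$ of the $\lambda_k$ satisfy $\lambda_k<\varepsilon$; hence at least $n-q$ of them satisfy $\lambda_k\geq\varepsilon>0$ and are therefore strictly positive, giving $\oplus\bigl(H^{\cbb}\varphi(\hat{p})\bigr)\geq n-q$ as desired. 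I do not anticipate a serious obstacle: the only points needing a little care are the finiteness of the maximum (so that $\hat{p}$ qualifies as a maximum point in the viscosity definition) and the bookkeeping of the spectral shift by $\varepsilon I$, both of which are routine.
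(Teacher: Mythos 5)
Your proposal is correct and takes essentially the same route as the paper's proof: both extract $\varepsilon>0$ and a ball $B$ from Definition~\ref{defn-strict-qpsh}, pass to the shifted test function $\varphi-\varepsilon\|{\cdot}\|^2$ via the viscosity characterisation of \qpshy\ (you cite Proposition~\ref{prop-usc-visc-qpsh}, the paper cites Theorem~\ref{thm-usc-visc-qpsh}, but it is the same direction of the equivalence), and conclude from $\ominus\big(H^{\cbb}\varphi(\hat{p})-\varepsilon\mathbbm{1}\big)\leq q$ by the spectral shift. Your extra remarks on the finiteness of $u(\hat{p})$ and on restricting the maximum to $B$ only make explicit details the paper leaves tacit.
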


\begin{proof} The result is trivial when $q\geq{n}$, so we suppose it 
is not the case. According to Definition~\ref{defn-strict-qpsh} and 
Theorem~\ref{thm-usc-visc-qpsh}, there are a real $\varepsilon>0$ and 
a non-empty open ball $B\subset{U}$ with centre at $y$, such that the 
restriction of $u{-}\varepsilon\|{\cdot}\|^2$ to $B$ is \qpsh\ in the 
classical and viscosity senses on $B$. Since the restriction of 
$u-\varphi$ to $\Omega$ also takes its maximum at $\hat{p}\in\Omega$, 
we can add and subtract $\varepsilon\|{\cdot}\|^2$ to $u-\varphi$ and 
consider the new test function $\varphi{-}\varepsilon\|{\cdot}\|^2$, 
so that Definition~\ref{def-visc-qpsh} yields,
$$q\geq\ominus\big(H^{\cbb}(\varphi-\varepsilon
\|{\cdot}\|^2)(\hat{p})\big)=\ominus\big(H^{\cbb}
\varphi(\hat{p})-\varepsilon\mathbbm{1}\big),$$
where $\mathbbm{1}$ is the identity $[n{\times}n]$-matrix. Hence, 
$H^{\cbb}\varphi(\hat{p})-\varepsilon\mathbbm{1}$ has at most $q$ 
strictly negative eigenvalues, and so $H^{\cbb}\varphi(\hat{p})$ has at 
least $n{-q}$ strictly positive eigenvalues, because $\varepsilon>0$. 
\end{proof}

Unfortunately, the converse of the previous lemma does not hold in 
general.

\begin{ex}\label{counter-sqpsh}
The function $\Im^4(z)=\frac{(z-\overline{z})^4}{16}$ obviously lies in 
$\PSH_0(\cbb)$, but it is only strictly \psh\ on $\cbb\setminus\rbb$, 
because its complex Hessian is $3\Im^2(z)$ and it vanishes on the real 
axis $\rbb\subset\cbb$. In the same way, the point~(\ref{qpsh-fcts}) in 
Proposition~\ref{propqpsh} implies that
\begin{equation}\label{eqn-ex-strict-0psh}
f(z):=\Im^4(z)+|\Im(z)|=\Im^4(z)+\max\{\Im(z),-\Im(z)\}
\end{equation}
lies in $\PSH_0(\cbb)$. We easily have $H^{\cbb}f(z)=3\Im^2(z)$ 
on $\cbb\setminus\rbb$, so that $f$ is also strictly \psh\ on 
$\cbb\setminus\rbb$. Nevertheless, $f$ is not strictly \psh\ on 
$\rbb$, because for every $\varepsilon>0$ the complex Hessian of 
$f{-\varepsilon\|\cdot}\|^2$ converges to $-\varepsilon$ when 
$z\not\in\rbb$ moves close enough to $\rbb$, so it becomes 
strictly negative near $\rbb$.

On the other hand, let $\varphi\in\ccal^2(\cbb)$ be any test function, 
such that $f-\varphi$ takes its maximum at some $\hat{p}\in\cbb$. 
Notice that $f-\varphi$ cannot take its maximum in the real axis 
$\rbb\subset\cbb$ because of the absolute value $|\Im(z)|$ in 
(\ref{eqn-ex-strict-0psh}), so that $\hat{p}\not\in\rbb$. Since 
$f-\varphi$ is $\ccal^2$-smooth on $\cbb\setminus\rbb$, we can 
repeat word by word the analysis done in the second part of the 
proof of Lemma~\ref{lem-smooth-visc-qpsh} to deduce that
$$H^{\cbb}\varphi(\hat{p})\geq{H^\cbb}f(\hat{p})=3\Im^2(\hat{p})>0$$ 
at every point $\hat{p}$ where $f-\varphi$ takes its maximum.
\end{ex}

Another interesting example appears after considering 
the result presented in the point~(\ref{lem-sup-conv-7}) of 
Lemma~\ref{lem-sup-conv}: The sup-convolution $\chi^\Phi$ is equal to 
the composition of the distance $\mathrm{dist}_X$ with a monotonically 
decreasing function $f$. Whence, if the characteristic function $\chi$ 
in (\ref{charac-funct-1}) or (\ref{charac-funct-2}) below is \qpsh, 
Lemma~\ref{lem-sup-conv} and Proposition \ref{prop-sup-conv} would imply 
that $\chi^\Phi$ and $ f\circ\mathrm{dist}_X$ are both \qpsh\ as well. 
In this case, S{\l}odkowski characterised in \cite{Sl} those closed 
sets $X\subset\cbb^n$ for which its characteristic function 
$\chi\in\PSH_q(\cbb^n)$. S{\l}odkowski actually produced several 
characterisations in \cite{Sl}, one of them says that 
$\chi\in\PSH_q(\cbb^n)$ if and only if the open complement 
$\cbb^n{\setminus}X$ is $(q{-1})$-\psc; see for example Theorems~2.5 
and~4.2 in \cite{Sl} or Theorem~3.8 in \cite{TPESZ2}. We can restate 
the previous analysis as follows.

\begin{cor}\label{ex-charac-funct}
Let $f:\rbb\to[-\infty,\infty)$ be \usc\ and monotonically decreasing, 
such that $f(0)=0$ and $f(t)<0$ for all $t>0$. Take the Euclidean 
distance $\mathrm{dist}_X$ to a non-empty closed set $X\subset\cbb^n$. 
Given any integer $q\geq0$, the statements below are all equivalent: 
\begin{enumerate}

\item\label{ex-charac-funct-1} 
The characteristic function below lies in $\PSH_q(\cbb^n)$,
\begin{equation}\label{charac-funct-2}
\chi(y):=\left\{\begin{array}{cl}0&\hbox{if~}y\in{X};\\
-\infty&\hbox{otherwise}.\end{array}\right.
\quad\hbox{for}\quad{y}\in\cbb^n.
\end{equation}

\item\label{ex-charac-funct-2} 
The composition $f\circ\mathrm{dist}_X$ lies in $\PSH_q(\cbb^n)$.

\item\label{ex-charac-funct-3} The open set $\cbb^n{\setminus}X$ is 
$(q{-1})$-\psc\ on $\cbb^n$ under the extra condition that $1\leq{q}<n$, 
and so $-\ln\mathrm{dist}_X$ lies in $\PSH_{q-1}(\cbb^n{\setminus}X)$. 
\end{enumerate} 
\end{cor}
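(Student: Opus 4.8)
The plan is to prove the two analytic conditions $(\ref{ex-charac-funct-1})$ and $(\ref{ex-charac-funct-2})$ equivalent by a direct sup-convolution argument, and then to obtain the geometric condition $(\ref{ex-charac-funct-3})$ by invoking S{\l}odkowski's known characterisation. Throughout I would write $\Phi:=f(\|{\cdot}\|)$ and use the identity $\chi^\Phi_{\cbb^n}=\chi^\Phi_X=f\circ\mathrm{dist}_X$ already established in point~$(\ref{lem-sup-conv-7})$ of Lemma~\ref{lem-sup-conv}; this is what turns the composition appearing in $(\ref{ex-charac-funct-2})$ into the sup-convolution of the characteristic function $\chi$.

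For $(\ref{ex-charac-funct-1})\Rightarrow(\ref{ex-charac-funct-2})$ I would apply Proposition~\ref{prop-sup-conv} to $u=\chi\in\PSH_q(\cbb^n)$ with the trivial choices $\acal=0$ and $g\equiv h\equiv0$ over $U=\cbb^n$. Two hypotheses must be checked. First, the proper set of definition of $\chi^\Phi$ is the whole of $\cbb^n$: since $X$ is closed and $f$ is monotonically decreasing, the supremum $\sup_{x\in X}f(\|y{-}x\|)=f(\mathrm{dist}_X(y))$ is attained at any nearest point $\hat{x}\in X$, so that $W=\cbb^n$. Second, $\chi^\Phi=f\circ\mathrm{dist}_X$ is upper semicontinuous, being the composition of the usc function $f$ with the continuous map $\mathrm{dist}_X$. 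With these two observations the proposition applies and gives $f\circ\mathrm{dist}_X\in\PSH_q(\cbb^n)$.

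For the converse $(\ref{ex-charac-funct-2})\Rightarrow(\ref{ex-charac-funct-1})$ I would recover $\chi$ as a non-increasing limit. Because $f$ is decreasing with $f(0)=0$, one has $f\circ\mathrm{dist}_X\leq0$ on $\cbb^n$; hence point~$(\ref{qpsh-fcts})$ of Proposition~\ref{propqpsh} shows that each multiple $k\,(f\circ\mathrm{dist}_X)$ lies in $\PSH_q(\cbb^n)$ and that the sequence is non-increasing in $k$. Since $f(0)=0$ while $f(t)<0$ for $t>0$ and $X$ is closed, the pointwise limit equals $0$ on $X$ and $-\infty$ off $X$, that is, it equals $\chi$ exactly. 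Point~$(\ref{qpsh-decreasing})$ of Proposition~\ref{propqpsh} then yields $\chi\in\PSH_q(\cbb^n)$, completing the equivalence $(\ref{ex-charac-funct-1})\Leftrightarrow(\ref{ex-charac-funct-2})$ for every integer $q\geq0$.

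Finally, the equivalence $(\ref{ex-charac-funct-1})\Leftrightarrow(\ref{ex-charac-funct-3})$, in the range $1\leq q<n$, is precisely S{\l}odkowski's characterisation of the closed sets whose characteristic function is \qpsh: one has $\chi\in\PSH_q(\cbb^n)$ if and only if $\cbb^n\setminus X$ is $(q{-}1)$-\psc, equivalently $-\ln\mathrm{dist}_X\in\PSH_{q-1}(\cbb^n\setminus X)$; here I would simply cite Theorems~2.5 and~4.2 of \cite{Sl}, or Theorem~3.8 of \cite{TPESZ2}. The only delicate step in the whole argument is the verification in the first implication that the proper set of definition fills $\cbb^n$ and that $\chi^\Phi$ is usc, so that Proposition~\ref{prop-sup-conv} is genuinely applicable; both facts, however, follow immediately from the closedness of $X$, the monotonicity and upper semicontinuity of $f$, and the identity recorded in Lemma~\ref{lem-sup-conv}.
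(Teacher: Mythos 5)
Your proposal is correct and follows essentially the same route as the paper's own proof: you establish (\ref{ex-charac-funct-1})$\Rightarrow$(\ref{ex-charac-funct-2}) via the identity $\chi^\Phi=f\circ\mathrm{dist}_X$ from point~(\ref{lem-sup-conv-7}) of Lemma~\ref{lem-sup-conv} together with Proposition~\ref{prop-sup-conv}, the converse via the non-increasing sequence $u_k=k\,(f\circ\mathrm{dist}_X)$ and points (\ref{qpsh-fcts}) and (\ref{qpsh-decreasing}) of Proposition~\ref{propqpsh}, and the equivalence with (\ref{ex-charac-funct-3}) by citing S{\l}odkowski, exactly as the paper does. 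Your explicit verification that the proper set of definition fills $\cbb^n$ (nearest-point attainment on the closed set $X$) and that $f\circ\mathrm{dist}_X$ is upper semi-continuous is the same check the paper makes, only spelled out in slightly more detail.
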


Notice that $\chi$ in (\ref{charac-funct-2}) is \usc\ when 
$X\subset\cbb^n$ is closed. Moreover, if we set $q=1$ in the above 
statements, we obtain a new characterisation for domains of 
holomorphy (0-\psh\ domains) $D\subset\cbb^n$: The composition 
$f\circ\mathrm{dist}_X$ is 1-\psh\ for the distance $\mathrm{dist}_X$ 
to the complement $\cbb^n{\setminus}D$ and a monotonically decreasing 
function $f$ as above.

\begin{proof} (\ref{ex-charac-funct-1}) implies 
(\ref{ex-charac-funct-2}). We have $\chi^\Phi=f\circ\mathrm{dist}_X$ 
for $\Phi=f(\|{\cdot}\|)$ according to the point~(\ref{lem-sup-conv-7}) 
in Lemma~\ref{lem-sup-conv} and identity (\ref{eqn2-lem-sup-conv}). 
Moreover, $\cbb^n$ is the proper set of definition for $\chi^\Phi$, 
because $X$ is closed, so that for each $y\in\cbb^n$ there is 
$\hat{x}\in{X}$ with $\mathrm{dist}_X(y)$ equal to $\|\hat{x}{-y}\|$. Since 
$f\circ\mathrm{dist}_X$ is \usc\ and $\chi$ lies in $\PSH_q(\cbb^n)$ by 
hypothesis, Proposition~\ref{prop-sup-conv} imply that 
$\chi^\Phi=f\circ\mathrm{dist}_X$ lies in $\PSH_q$ as well.

(\ref{ex-charac-funct-2}) implies (\ref{ex-charac-funct-1}). Notice that 
$\chi=\lim_{k\to\infty}u_k$ for $u_k:=k\,(f \circ\mathrm{dist}_X)$ and 
every integer number $k\in\nbb$. Since $u_1\geq{u_2}\geq...$, points 
(\ref{qpsh-fcts}) and (\ref{qpsh-decreasing}) in 
Proposition~\ref{propqpsh} implies that $u_k$ and 
$\chi$ both lie in $\PSH_q(\cbb^n)$.

(\ref{ex-charac-funct-1}) is equivalent to (\ref{ex-charac-funct-3}). 
Theorems~2.5 and~4.2 in \cite{Sl} state that $\chi$ lies in 
$\PSH_q(\cbb^n)$ if and only if the complement $\cbb^n{\setminus}X$ 
is $(q{-1})$-\psc. Finally, the equivalence with the property that 
$-\ln\mathrm{dist}_X$ lies in $\PSH_{q-1}(\cbb^n{\setminus}X)$ follows 
from Definition~3.3 in \cite{TPESZ} or Proposition~3.3 in \cite{TPESZ2}. 
\end{proof}

This result extends a list of characterisations and properties of \qpsc\ 
sets we collected in \cite{TPESZ} and \cite{TPESZ2}. Besiedes, we must 
point out that the previous results can be extended to consider locally 
closed sets $X\subset\cbb^n$ instead of closed ones. However, we must be 
careful, because the proper set of definition for $\chi^\Phi$ may not be 
the whole space $\cbb^n$. Indeed, for each $y\in\cbb^n$ there is 
$\hat{x}\in\overline{X}$ with $\mathrm{dist}_X(y)$ equal to 
$\|\hat{x}{-y}\|$, but $\hat{x}$ may not lie in $X$, when 
it is different from $\overline{X}$.

\addcontentsline{toc}{section}{References}

\bibliographystyle{siam}
\bibliography{qpsc3}

\begin{thebibliography}{10}

\bibitem{Alexandrov}
{\sc A.~D. Alexandrov}, {\em {Almost everywhere existence of the second
  differential of a convex function and some properties of convex surfaces
  connected with it}}, Leningrad State Univ. Annals [Uchenye Zapiski] Math.
  Ser., 6 (1939), pp.~3--35.

\bibitem{ALL-convex}
{\sc O.~Alvarez, J.-M. Lasry, and P.-L. Lions}, {\em {Convex viscosity
  solutions and state constraints}}, J. Math. Pures Appl., 75 (1997),
  pp.~265--288.

\bibitem{Bu}
{\sc L.~Bungart}, {\em Piecewise smooth approximations to
  {$q$}-plurisubharmonic functions}, Pacific J. Math., 142 (1990),
  pp.~227--244.

\bibitem{Crandall}
{\sc M.~G. Crandall}, {\em Viscosity solutions: A primer}, in Viscosity
  Solutions and Applications, vol.~1660 of Lecture Notes in Mathematics,
  Springer-Verlag Berlin, 1997, pp.~1--43.

\bibitem{CIL-viscosity}
{\sc M.~G. Crandall, H.~Ishii, and P.-L. Lions}, {\em {User's guide to
  viscosity solutions of second order partial differential equations}}, Bull.
  Amer. Math. Soc., 27 (1992), pp.~1--67.

\bibitem{Dieu}
{\sc N.~Q. Dieu}, {\em {$q$-plurisubharmonicity and $q$-pseudoconvexity in
  $\cbb^n$}}, Publ. Mat., 50 (2006), pp.~349--369.

\bibitem{Fu2}
{\sc O.~Fujita}, {\em {On the equivalence of the q-plurisubharmonic functions
  and the pseudoconvex functions of general order}}, Ann. Reports of Graduate
  School of Human Culture, Nara Women's Univ., 7 (1992), pp.~77--81.

\bibitem{HM}
{\sc L.~R. Hunt and J.~J. Murray}, {\em {$q$}-plurisubharmonic functions and a
  generalized {D}irichlet problem}, Mich. Math. J., 25 (1978).

\bibitem{Katzourakis}
{\sc N.~Katzourakis}, {\em An Introduction To Viscosity Solutions for Fully
  Nonlinear PDE with Applications to Calculus of Variations in $L^\infty$},
  Springerbriefs in Mathematics, Springer, Berlin, 2015.

\bibitem{Koike}
{\sc S.~Koike}, {\em A Beginner's Guide to the Theory of Viscosity Solutions},
  vol.~13 of MSJ Memoirs, Mathematical Society of Japan, Saitama, 2013.

\bibitem{Moreau1963}
{\sc J.-J. Moreau}, {\em Inf-convolution des fonctions num\'eriques sur un
  espace vectoriel}, C.R. Acad. Sci. Paris, 256 (1963), pp.~5047--5049.

\bibitem{Moreau1966}
\leavevmode\vrule height 2pt depth -1.6pt width 23pt, {\em Fonctionnelles
  convexes}, S\'eminaire Jean Leray, 2 (1966-1967), pp.~1--108.

\bibitem{TPESZ}
{\sc T.~Pawlaschyk and E.~S. Zeron}, {\em {On convex hulls and pseudoconvex
  domains generated by $q$-plurisubharmonic functions, part I}}, J. Math. Anal.
  App., 408 (2013), pp.~394--408.

\bibitem{TPESZ2}
\leavevmode\vrule height 2pt depth -1.6pt width 23pt, {\em {On convex hulls and
  pseudoconvex domains generated by $q$-plurisubharmonic functions, part II}},
  Bol. Soc. Mat. Mex., 22 (2016), pp.~367--388.

\bibitem{Sl2}
{\sc Z.~S{\l}odkowski}, {\em The {B}remermann-{D}irichlet problem for
  {$q$}-plurisubharmonic functions}, Ann. Scuola Norm. Super. Pisa-Cl. Sci.
  (4), 11 (1984), pp.~303--326.

\bibitem{Sl}
\leavevmode\vrule height 2pt depth -1.6pt width 23pt, {\em Local maximum
  property and {$q$}-plurisubharmonic functions in uniform algebras}, J. Math.
  Anal. Appl., 115 (1986), pp.~105--130.

\bibitem{SlTo}
{\sc Z.~S{\l}odkowski and G.~Tomassini}, {\em Weak solutions for the levi
  equation and envelope of holomorphy}, J. Funct. Anal., 101 (1991),
  pp.~392--407.

\bibitem{Va}
{\sc V.~V\^aj\^aitu}, {\em {Locally $q$-complete open sets in Stein spaces with
  isolated singularities}}, Kyushu J. Math., 51 (1997), pp.~355--368.

\bibitem{Zhang}
{\sc F.~Zhang}, {\em Matrix Theory: Basic Results and Techniques}, Springer,
  Berlin, 1999.

\end{thebibliography}

\end{document}